\documentclass[11pt]{article}
\usepackage[T1]{fontenc}
\usepackage[utf8]{inputenc}
\usepackage{geometry}
\geometry{verbose,tmargin=2cm,bmargin=2cm,lmargin=2cm,rmargin=2cm}
\usepackage{amsmath}
\usepackage{amsthm}
\usepackage{setspace}
\onehalfspacing
\usepackage[unicode=true,
 bookmarks=false,
 breaklinks=false,pdfborder={0 0 1},colorlinks=false]
 {hyperref}

\makeatletter

\theoremstyle{plain}
\newtheorem{thm}{\protect\theoremname}
\theoremstyle{plain}
\newtheorem{lem}[thm]{\protect\lemmaname}
\theoremstyle{plain}
\newtheorem{claim}[thm]{\protect\claimname}
\theoremstyle{plain}
\newtheorem{cor}[thm]{\protect\corname}

\usepackage{amssymb}

\makeatother

\providecommand{\claimname}{Claim}
\providecommand{\corname}{Corollary}
\providecommand{\lemmaname}{Lemma}
\providecommand{\theoremname}{Theorem}
\newcommand{\cupdot}{\mathbin{\mathaccent\cdot\cup}}

\begin{document}
\title{A multipartite analogue of Dilworth's Theorem}
\author{Jacob Fox\thanks{Department of Mathematics, Stanford University, Stanford, CA 94305. Email: {\tt jacobfox@stanford.edu}. Research supported by a Packard Fellowship and by NSF Awards DMS-1855635 and DMS-2154169.} \and Huy Tuan Pham\thanks{Department of Mathematics, Stanford University, Stanford, CA 94305. Email: {\tt huypham@stanford.edu}. Research supported by a Two Sigma Fellowship, a Clay Research Fellowship and a Stanford Science Fellowship.}}
\maketitle
\begin{abstract}
We prove that every partially ordered set on $n$ elements 
contains $k$ subsets $A_{1},A_{2},\dots,A_{k}$ such that either each of these subsets has size $\Omega(n/k^{5})$ and, for every $i<j$, every element in $A_{i}$ is less than or equal to every element in $A_{j}$, or each of these subsets has size $\Omega(n/(k^{2}\log n))$
and, for every $i \not = j$, every element in $A_{i}$ is incomparable with every element in $A_{j}$ for $i\ne j$. This answers a question of the first author from 2006. As a corollary, we prove for each positive integer $h$ there is $C_h$ such that for any $h$ partial orders $<_{1},<_{2},\dots,<_{h}$ on a set of $n$ elements, there exists $k$ subsets $A_{1},A_{2},\dots,A_{k}$ each of size at least $n/(k\log n)^{C_{h}}$ such that for each partial order $<_{\ell}$, either $a_{1}<_{\ell}a_{2}<_{\ell}\dots<_{\ell}a_{k}$ for any tuple of elements $(a_1,a_2,\dots,a_k) \in A_1\times A_2\times \dots \times A_k$, or $a_{1}>_{\ell}a_{2}>_{\ell}\dots>_{\ell}a_{k}$ for any $(a_1,a_2,\dots,a_k) \in A_1\times A_2\times \dots \times A_k$, or $a_i$ is incomparable with $a_j$ for any $i\ne j$, $a_i\in A_i$ and $a_j\in A_j$. This improves on a 2009 result of Pach and the first author motivated by problems in discrete geometry. 
\end{abstract}

\section{Introduction}

In a partially ordered set, a \textit{chain} is a set of pairwise comparable elements, and an \textit{antichain} is a set of pairwise incomparable elements. Dilworth's theorem \cite{D} implies that for all positive integers $k$ and $n$, every partially ordered set with $n$ elements contains a chain of size $k$ or an antichain of size $\lceil n/k\rceil$. In particular, every partially ordered set on n elements contains a chain or antichain of size $\lceil\sqrt{n}\rceil$. Equivalently, every comparability graph on $n$ vertices contains a clique or independent set of size at least $\sqrt{n}$. In \cite{F}, the first
author shows that one can guarantee a much larger balanced complete bipartite
subgraph in a comparability graph or its complement. It is further shown in \cite{FPT} that there exists a constant $c>0$ such that in every partially ordered set of size $n\ge 2$, there are disjoint subsets $A,B$ of size at least $cn$ where every
element of $A$ is larger than every element of $B$, or there are
disjoint subsets $A,B$ of size at least $\frac{cn}{\log n}$ where
every element of $A$ is incomparable with any element of $B$. For
subsets $A,B$ of $P$, we write $A<B$ if $a<b$ for all $a\in A,b\in B$. We also write $B>A$ if $A<B$. We say that $A$ and $B$ are \textit{comparable} if $A<B$ or $B<A$.
We say that $A$ and $B$ are \textit{totally incomparable} if $a,b$ are
incomparable for all $a\in A$ and $b\in B$. Let $m_k(n)$ be the largest integer such that every partially ordered set of $n$ elements contains 
$k$ disjoint subsets $A_1,\dots,A_k$ each of size $m_k(n)$ such that either $A_{1}>A_{2}>\dots>A_{k}$ or all pairs of different
subsets $A_{i},A_{j}$ are totally incomparable. It is proved in \cite{F} that $m_{2}(n)=\Theta\left(n/\log n\right)$. For $k\geq 2$, we clearly have  $m_{k}(n)\le m_{2}(n)=O(n/\log n)$. By iterating the lower bound on $m_2(n)$, we obtain the lower bound $m_{2^{k}}(n)\ge\frac{c^k n}{(\log n)^{2k-1}}$ for a positive constant $c$ and $n$ sufficiently large in terms of $k$.  The first author asked in \cite{F} if this lower bound can be improved, and in particular,
whether $m_{3}(n)=\Omega\left(\frac{n}{\log n}\right)$. Our main result shows that this is indeed the case. 
\begin{thm}
\label{thm:multi-dilworth}Let $k\ge 2$ and  $n\ge (100k)^5$ be integers and
$(P,<)$ be a partially ordered set on $n$ elements. Then
there exists $k$ disjoint subsets $A_{1},\dots,A_{k}$ of $P$ such that either each $A_i$
 has size at least $10^{-4}k^{-5}n$ and they satisfy $A_{1}>A_{2}>\dots>A_{k}$,
or each $A_i$ has size at least $40^{-1}k^{-2}n/\log n$ and 
$A_{1},\dots,A_{k}$ are pairwise totally incomparable. 
\end{thm}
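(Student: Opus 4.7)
The plan is a case analysis on the maximum antichain size $w$ and the longest chain length $c$ of $P$, using Dilworth's inequality $cw\ge n$. If $w\le k^{4}$, then $c\ge n/k^{4}$; take a longest chain and cut it into $k$ consecutive blocks of size $\ge n/k^{5}\ge 10^{-4}k^{-5}n$, which automatically form $A_{1}>A_{2}>\cdots>A_{k}$. If $w\ge n/(40k\log n)$, take a maximum antichain and split it into $k$ equal pieces; any two pieces are totally incomparable (being subsets of a single antichain) and each has size $\ge w/k\ge 40^{-1}k^{-2}n/\log n$.

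The substance of the proof lies in the intermediate regime $k^{4}<w<n/(40k\log n)$, where both $c>40k\log n$ and $w>k^{4}$ are only moderate. My plan is to invoke the $k=2$ theorem $m_{2}(n)=\Omega(n/\log n)$ from \cite{F}: it yields either a comparable pair $X<Y$ with $|X|,|Y|=\Omega(n)$, or an incomparable pair $X\perp Y$ with $|X|,|Y|=\Omega(n/\log n)$. In the comparable case I would recurse inside $X$ with parameter $k-1$: a returned $(k-1)$-chain is completed to a $k$-chain by appending $Y$, while a returned $(k-1)$-antichain inside $X$ requires extra argument since $Y$ is comparable to every element of $X$ and cannot be used as another totally incomparable subset. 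In the incomparable case, a $(k-1)$-antichain found inside $X$ combines with $Y$ to give $k$ pairwise totally incomparable subsets (using $X\perp Y$), while a $(k-1)$-chain inside $X$ requires special handling because $X$ and $Y$ are unordered relative to each other.

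The main obstacle is keeping the logarithmic loss to a single factor of $\log n$, rather than the tower $(\log n)^{k-1}$ that a naive iteration of $m_{2}$ in the antichain sub-case would produce. To control this, I expect one must ensure that the antichain sub-case of $m_{2}$ is invoked essentially only once; for instance, one would show that immediately after a single antichain-case split, the resulting subsets $X$ or $Y$ already fall into one of the two easy regimes above, thanks to the precise calibration of the thresholds $k^{4}$ and $n/(40k\log n)$. Verifying that parameters do degrade into an easy regime after one log-losing step---and handling the awkward cross cases where the recursion returns a chain in the antichain case of $m_{2}$ (or vice versa), where the outer set cannot simply be adjoined---is the delicate piece of the argument.
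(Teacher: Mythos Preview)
Your two boundary cases are fine: if the width $w$ satisfies $w\le k^4$ you can slice a longest chain, and if $w\ge n/(40k\log n)$ you can slice a maximum antichain. The entire content of the theorem lies in the intermediate regime, and there your plan does not go through.

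The obstacles you list are genuine and are not removed by the mechanism you suggest. In the comparable sub-case of $m_2$ (say $X<Y$ with $|X|,|Y|=\Omega(n)$), if the recursion on $X$ returns $k-1$ pairwise totally incomparable sets there is no candidate for a $k$-th set: every element of $Y$ is comparable to every element of $X$, and nothing in the argument has produced a large set totally incomparable to $X$. Symmetrically, in the incomparable sub-case a returned $(k-1)$-chain inside $X$ cannot be completed using $Y$. These are not edge effects; they block the recursion outright. Your proposed remedy --- that after a single $\log$-losing split the resulting piece must land in one of the two easy regimes --- is false in general: passing to $X$ with $|X|=\Theta(n/\log n)$ does not force the width of $X$ below $k^4$ nor above $|X|/(40k\log|X|)$, since the width of $X$ can be anything up to $w$, and $w<n/(40k\log n)$ can easily still lie in the \emph{new} intermediate window $(k^4,\,|X|/(40k\log|X|))$. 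So the scheme, as written, still accumulates $(\log n)^{\Theta(k)}$ in the antichain branch and stalls in the cross cases.

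The paper's proof avoids recursion on $k$ in the incomparable part altogether. It first introduces an auxiliary order $<_\ell$ on $P$ (declare $x<_\ell y$ iff at least $\ell$ elements lie strictly between $x$ and $y$ in $<$); a $(k{+}1)$-chain in $<_\ell$ immediately yields $A_1<\dots<A_k$ each of size $\ell$, and otherwise Mirsky's theorem gives a $<_\ell$-antichain $P'$ of size $\ge |P|/k$. A triple-counting argument on $P'$ then extracts a subset $Q$ of size $\ge 7|P|/(16k)$ in which every down-set (or every up-set) has size $O(\sqrt{|Q|\ell})$. The second step is an explicit algorithm (called \textsc{Select}/\textsc{Condense}) which, from any poset $Q$ with uniformly small down-sets, produces $k$ pairwise totally incomparable subsets each of size $\Omega(|Q|/(k^2\log|Q|))$; the single $\log$ factor arises once inside this extraction, not once per level of a recursion on $k$. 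This structural dichotomy (either a $k$-chain of blocks, or a large subposet with globally small down-sets) is the idea your proposal is missing.
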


In fact, we prove a more general version of Theorem \ref{thm:multi-dilworth}, Theorem \ref{thm:multi-dilworth-general}, which allows for a more general trade-off between the lower bound in the comparable and totally incomparable case. For example, we also obtain the following result. 
\begin{thm}
\label{thm:multi-dilworth-variant}Let $k\ge 2$ and  $n\ge 10^{10} k^3 (\log k)^2$ be integers and 
$(P,<)$ be a partially ordered set on $n$ elements. Then there exists $k$ disjoint subsets $A_{1},\dots,A_{k}$ of $P$ such that either each $A_i$ has size at least $10^{-4}k^{-3}(\log k)^{-2}n$ and $A_{1}>A_{2}>\dots>A_{k}$,
or each $A_i$ has size at least $20^{-1}k^{-2}(\log k)^{-1}n/\log n$ and 
$A_{1},\dots,A_{k}$ are pairwise totally incomparable. 
\end{thm}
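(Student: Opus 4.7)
My plan is to derive Theorem \ref{thm:multi-dilworth-variant} as an instance of the general trade-off result Theorem \ref{thm:multi-dilworth-general}, which the introduction advertises as interpolating between guarantees of the comparable type and guarantees of the totally-incomparable type. Comparing the promised bounds in Theorem \ref{thm:multi-dilworth-variant} against those in Theorem \ref{thm:multi-dilworth} reveals that the comparable size has been strengthened from $\Omega(n/k^{5})$ to $\Omega(n/k^{3}(\log k)^{2})$---an improvement by a factor of order $k^{2}/(\log k)^{2}$---while the incomparable size has been weakened from $\Omega(n/k^{2}\log n)$ to $\Omega(n/k^{2}\log k\log n)$---a loss of a factor $\log k$. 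This pattern strongly suggests that Theorem \ref{thm:multi-dilworth-variant} corresponds to setting a trade-off parameter in Theorem \ref{thm:multi-dilworth-general} to some value of order $\log k$.

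Concretely, I would first read off from Theorem \ref{thm:multi-dilworth-general} the explicit dependence of the comparable and incomparable size guarantees on the trade-off parameter $t$. Based on the two data points above, I would expect a form such as comparable $\Omega(n/(k^{a}t^{b}))$ and incomparable $\Omega(nt/(k^{c}\log n))$ for fixed constants $a,b,c$. Substituting $t\asymp\log k$ and simplifying should then yield the bounds claimed in Theorem \ref{thm:multi-dilworth-variant}. The remaining work is arithmetic: checking the explicit constants $10^{-4}$ and $20^{-1}$, and verifying that the hypothesis $n\ge 10^{10}k^{3}(\log k)^{2}$ is at least as strong as the corresponding lower bound on $n$ required by Theorem \ref{thm:multi-dilworth-general} with this choice of $t$.

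The only real obstacle is that the substance of the argument lies entirely in Theorem \ref{thm:multi-dilworth-general}, not in this corollary. Supposing one had to prove Theorem \ref{thm:multi-dilworth-variant} directly, the natural route would be to mimic the proof of Theorem \ref{thm:multi-dilworth} but recalibrate parameters: iteratively apply the two-subset dichotomy of \cite{FPT}, and at each recursive level spend a $\log k$ factor more aggressively on the incomparable branch in order to preserve more of each subset in the comparable branch. The delicate point of such a direct approach is ensuring uniformity and compatibility of the structural hypotheses across the roughly $\log k$ levels of recursion, which is presumably exactly why the authors formulate the general trade-off statement Theorem \ref{thm:multi-dilworth-general} once and for all, and then deduce both Theorem \ref{thm:multi-dilworth} and Theorem \ref{thm:multi-dilworth-variant} as special cases.
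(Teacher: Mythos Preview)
Your approach matches the paper exactly: Theorem \ref{thm:multi-dilworth-variant} is derived from Theorem \ref{thm:multi-dilworth-general} by a specific choice of parameters, namely $f(k)=8k\log k$ and $g(k)=1/2$ (the general theorem is parameterized by two functions $f,g$ subject to recursive inequalities rather than by a single scalar $t$, with comparable bound $\tfrac{1}{37}\,g(k)^{2}n/(kf(k)^{2})$ and incomparable bound $\tfrac{7}{16}\,n/(kf(k)\log n)$; the paper leaves the verification of the hypotheses on $f,g$ to the reader).

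One small slip worth correcting: your guessed form $\Omega(nt/(k^{c}\log n))$ for the incomparable size has the $t$-dependence reversed. As you yourself observed, raising the trade-off parameter \emph{weakens} the incomparable guarantee, so $t$ should sit in the denominator; taking $f(k)\asymp kt$ and $g(k)\asymp 1$ with $t=\log k$ gives incomparable $\asymp n/(k^{2}t\log n)$ and comparable $\asymp n/(k^{3}t^{2})$, which is the shape you want.
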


Theorem \ref{thm:multi-dilworth} implies $m_{k}(n)\ge\frac{n}{40k^{2}\log n}$ for $n$ sufficiently large in terms of $k$, so the lower bound on $m_{k}(n)$ matches the trivial upper bound $m_{k}(n) \le \frac{Cn}{\log n}$ up to a constant factor depending on $k$. In fact, the dependency on $k$ in Theorem \ref{thm:multi-dilworth} is also best possible, which we next show. In \cite{F}, it is shown that for any $0<\epsilon<1$, there exists a partially
ordered set $(Q,<)$ on $n$ elements such that any element of $Q$ is comparable with at most $n^{\epsilon}$ other elements, and there does not exist two disjoint subsets of $Q$ of size at least $\frac{14n}{\epsilon\log_{2}n}$ which are totally incomparable.
Consider the partially ordered set $P$ consisting of $k-1$ copies
of $Q$, labeled $Q_1,\dots,Q_{k-1}$, and $Q_1<Q_2<\dots<Q_{k-1}$. The partially ordered set $P$ has $(k-1)n$ elements. 
There does not exist $k$ disjoint subsets $P_1<P_2<\dots<P_k$ of $P$ each of size at
least $n^{\epsilon}$. On the other hand, any pair of disjoint subsets of $P$ which are totally incomparable
must both be subsets of the same copy $Q_i$ of $Q$, hence, each
such set has size at most $\frac{14n}{\epsilon\log_{2}n}$. From any
$k \geq 2$ pairwise totally incomparable sets each of size $s$, we can find two
totally incomparable sets each of size at least $s\lfloor k/2 \rfloor \geq sk/3$. These sets are the union of $\lfloor k/2 \rfloor$ sets and the union of the remaining $\lceil k/2 \rceil$ sets of the $k$ pairwise totally incomparable sets. Hence, $sk/3 < \frac{14n}{\epsilon\log_{2}n}$. Taking $\epsilon=1/2$, we have $s<\frac{84n}{k\log_{2}n}<\frac{200|P|}{k^2\log_{2}|P|}$.
This shows that the result in Theorem \ref{thm:multi-dilworth} is
best possible in terms of the dependency on $n$ and $k$ in the incomparable case.
It also determines $m_{k}(n)$ up to an absolute constant. 
\begin{cor}
\label{cor:m_k} Let $k\ge 2$ be an integer. For $n\ge (100k)^5$, we have $$m_{k}(n) = \Theta\left(\frac{n}{k^2\log n}\right).$$
\end{cor}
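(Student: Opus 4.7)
My plan is to separate the two directions of the corollary.

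For the upper bound $m_k(n) = O(n/(k^2 \log n))$, I would simply invoke the construction already exhibited in the paragraph preceding the statement: the poset $P$ on $|P| = (k-1)n$ elements formed by stacking $k-1$ copies of the extremal poset $Q$ from \cite{F} (taken with $\epsilon = 1/2$) forbids both $k$ large chain-like parts (each of size at most $n^{1/2}$) and $k$ large pairwise totally incomparable parts (each of size at most $84n/(k\log_2 n)$, rewritten as $\le 200|P|/(k^2 \log_2|P|)$ using $|P| = (k-1)n$). Taking the smaller of these two bounds gives $m_k(|P|) = O(|P|/(k^2\log|P|))$, which is the upper bound we want. I would only need to check the arithmetic that the condition $n \ge (100k)^5$ is enough for the rewrite $84n/(k\log_2 n) \le 200|P|/(k^2\log_2|P|)$ to hold.

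For the lower bound $m_k(n) = \Omega(n/(k^2\log n))$, I would apply Theorem \ref{thm:multi-dilworth}. Its totally incomparable branch already produces $k$ parts of size $\ge n/(40 k^2 \log n)$, which is $\Omega(n/(k^2\log n))$ with the absolute constant $1/40$. Its chain branch gives $|A_i| \ge n/(10^4 k^5)$; this alone meets the target precisely when $\log n \gtrsim k^3$, i.e., for $n$ very large compared to $k$. For the intermediate range $(100k)^5 \le n < \exp(\Theta(k^3))$, where the chain branch is too lossy, I would appeal to the more general Theorem \ref{thm:multi-dilworth-general}, tuning its tradeoff parameter so that the chain-side and incomparable-side conclusions both come out to $\Theta(n/(k^2 \log n))$; Theorem \ref{thm:multi-dilworth-variant} is one such specialization, trading a $\log k$ factor on the incomparable side for a $k^2(\log k)^2$ improvement on the chain side, and analogous balanced choices should cover the entire range.

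The principal obstacle is the chain case in the regime where $\log n$ is modest relative to $k^3$: there, Theorem \ref{thm:multi-dilworth} on its own loses a factor of roughly $k^3/\log n$ against the target. Overcoming this requires the smoother, parameter-dependent tradeoff of Theorem \ref{thm:multi-dilworth-general}, which interpolates between the chain and incomparable lower bounds; the proof reduces to choosing the tradeoff parameter so that the two bounds meet. Once Theorem \ref{thm:multi-dilworth-general} is in hand as a black box, the corollary follows by combining its lower bound with the upper-bound construction, and the remaining work is routine bookkeeping of absolute constants valid throughout $n \ge (100k)^5$.
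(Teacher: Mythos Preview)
Your overall approach coincides with the paper's: the corollary is presented as an immediate consequence of Theorem~\ref{thm:multi-dilworth} for the lower bound and of the explicit construction in the preceding paragraph for the upper bound, with no separate proof given. Your treatment of the upper bound is exactly the paper's, and for the lower bound you invoke the same theorem.

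You have, however, correctly noticed something the paper glosses over. In the range $(100k)^5 \le n \le \exp(\Theta(k^3))$, the chain branch of Theorem~\ref{thm:multi-dilworth} yields only sets of size $\Theta(n/k^5)$, which is \emph{not} $\Omega(n/(k^2\log n))$ with an absolute constant. The paper itself only asserts the lower bound ``for $n$ sufficiently large in terms of $k$,'' so the threshold $n\ge(100k)^5$ in the corollary appears to be a mild imprecision carried over from the hypothesis of Theorem~\ref{thm:multi-dilworth} rather than the true range on which $m_k(n)=\Theta(n/(k^2\log n))$ is established.

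Your proposed remedy via Theorem~\ref{thm:multi-dilworth-general} does not actually close this gap. To make the incomparable branch $\tfrac{7n}{16kf(k)\log n}$ reach $\Omega(n/(k^2\log n))$ you need $f(k)=O(k)$; but then the recursion $f(k)\ge 2f(\lceil k/2\rceil)$ together with the constraint $g(k)\le \tfrac{f(k)/2-f(\lfloor k/2\rfloor)-3}{2k}$ forces $g(k)=O(1/k)$, and the chain branch $\tfrac{g(k)^2 n}{37k f(k)^2}$ is again only $O(n/k^5)$. Taking $f$ superlinear (as in Theorem~\ref{thm:multi-dilworth-variant}) lets $g$ be larger but spoils the incomparable branch by the corresponding factor. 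In short, no admissible pair $(f,g)$ makes both branches $\Omega(n/(k^2\log n))$ unless $\log n \gtrsim k^3$, which is precisely the regime Theorem~\ref{thm:multi-dilworth} already covers. So the subtlety you identified is real, but it cannot be repaired with the tradeoff in Theorem~\ref{thm:multi-dilworth-general}; it reflects a slight overstatement of the range in the corollary rather than a fixable gap in your argument.
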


The initial study of these extremal problems for partially ordered sets was motivated by applications and connections to problems in discrete geometry. 

A family of graphs is said to have the \textit{Erd\H{o}s-Hajnal property} if there exists a constant $c>0$ such that each graph $G$ in the family contains either a clique or an independent set of size at least $|V(G)|^c$. The celebrated conjecture of Erd\H{o}s and Hajnal \cite{EH} states that any hereditary family of graphs which is not the family of all graphs has the Erd\H{o}s-Hajnal property. Given a family of geometric objects, the \textit{intersection graph} of a set of objects in the family is the graph whose vertices correspond to objects in the set, and two vertices are joined by an edge if the corresponding objects have a nonempty intersection. In the geometric setting, the Erd\H{o}s-Hajnal property of many families of intersection graphs of geometric objects can be understood from sets equipped with multiple partial orders. For example, Larman et al.~\cite{LMPT} and Pach and T\"{o}r\H{o}csik \cite{PT} introduced four partial orders on the set of convex sets in the plane with the property that two convex sets are disjoint if and only if they are comparable in at least one of the four partial orders. Using the four partial orders and Dilworth's Theorem, it can be shown that any set of $n$ convex sets in the plane contains a collection of $n^{1/5}$ pairwise intersecting sets or $n^{1/5}$ pairwise disjoint sets. Thus, the family of intersection graphs of convex sets in the plane has the Erd\H{o}s-Hajnal property. This result can be generalized to the family of intersection graphs of \textit{vertically convex sets}. A \textit{vertically convex set} is a compact connected
set in the plane with the property that any straight line parallel
to the $y$-axis of the coordinate system intersects it in an interval
(which may be empty or may consist of one point). In particular, any
$x$-monotone arc, that is, the graph of any continuous function defined
on an interval of the $x$-axis, is vertically convex. The four partial
orders introduced in \cite{LMPT,PT} satisfy the stronger property
that any two vertically convex sets are disjoint if and only if they
are comparable in at least one of the four partial orders. Thus, any
collection of $n$ vertically convex sets contain $n^{1/5}$ pairwise
intersecting sets or $n^{1/5}$ pairwise disjoint sets.

A family of graphs has the \textit{strong Erd\H{o}s-Hajnal property} if there exists a constant $c>0$ such that each graph $G$ in the family contains two subsets each of size at least $cn$ which are either empty or complete to each other. It is known that the strong Erd\H{o}s-Hajnal property implies the Erd\H{o}s-Hajnal property (see \cite{APPRS,FP08,FPS23}). In \cite{FP}, the first author and Pach study complete bipartite subgraphs in the comparability or incomparability graphs of multiple partial orders. In particular, using the bound $m_{2^{k}}(n)\ge\frac{c^{k}n}{(\log n)^{2k-1}}$, it is shown in \cite{FP} that given any $h$ partial orderings on the same set $P$ of size $n$, there exists two subsets $A$ and $B$ of $P$ of size at least $n2^{-(1+o(1))(\log\log n)^{h}}$ such that $A$ and $B$ are either comparable or totally incomparable in each of the $h$ partial orderings. This implies that any collection of $n$ vertically convex sets contains two subcollections $A$ and $B$ each of size at least
$n2^{-(1+o(1))(\log\log n)^{4}}$ such that any set in $A$ intersects
any set in $B$ or any set in $A$ is disjoint from any set in $B$. We remark that a construction of Kyn\v{c}l \cite{K}, improving on a previous result of K\'{a}rolyi et al.~\cite{KPT}, shows that for infinitely many $n$, there exists a family of $n$ line segments in the plane with at most $n^{.405}$ members that are pairwise intersecting or pairwise disjoint. Thus, we can guarantee much larger complete or empty bipartite graphs than cliques or independent set in the intersection graphs of vertically convex sets.

Our next result gives an improvement of the main result of \cite{FP}. Using Theorem \ref{thm:multi-dilworth}, we generalize the result of \cite{FP} to find $k$ sets which are pairwise comparable or pairwise totally incomparable in each of the partial orderings, and we also obtain an improved bound on the size of the sets even in the case $k=2$. 
\begin{thm}
\label{thm:multiple}Let $k$ be a positive integer. For $i\in[h]$,
let $(P,<_{i})$ be partial orderings on a set $P$ of size $n$, where $n$ is sufficiently large in terms of $k$. 
Then there exists $k$ sets $A_{1},\dots,A_{k}$ such that for each
$i\in[h]$, either $A_{1}<_{i}A_{2}<_{i}\dots<_{i}A_{k}$, or $A_{1}>_{i}A_{2}>_{i}\dots>_{i}A_{k}$,
or $A_{j},A_{j'}$ are totally incomparable in $<_{i}$ for all $j\ne j'$.
Furthermore, for all $j\le k$, $|A_{j}|\ge\frac{n}{(10k\log n)^{12^{h+1}}}$. 
\end{thm}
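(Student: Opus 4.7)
The proof is by induction on $h$, the number of partial orderings. The base case $h=1$ follows immediately from Theorem~\ref{thm:multi-dilworth}, which produces $k$ sets of size $\Omega(n/(k^{2}\log n))$; this comfortably exceeds the required $n/(10k\log n)^{12^{2}}$ once $n$ is sufficiently large, so the base case holds with substantial slack.

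For the inductive step, assume the statement holds for $h-1$ orderings with exponent $12^{h}$. The strategy is to combine the inductive hypothesis (handling $<_{1},\ldots,<_{h-1}$) with an application of Theorem~\ref{thm:multi-dilworth} or its more flexible form Theorem~\ref{thm:multi-dilworth-general} (handling $<_{h}$). Concretely, apply the inductive hypothesis to $<_{1},\dots,<_{h-1}$ with the parameter $k$ replaced by a larger integer $K$ polynomial in $k$, obtaining $K$ disjoint subsets $C_{1},\dots,C_{K}$ of $P$, each of size at least $n/(10K\log n)^{12^{h}}$, that are chain-ordered or pairwise totally incomparable under each of the first $h-1$ orderings. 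By a single global relabelling, we may assume every chain structure among the $C_{j}$'s uses the natural index direction up to reversal; any selection of $k$ of the $C_{j}$'s indexed by distinct indices $j_{1}<\dots<j_{k}$ therefore automatically preserves the chain/antichain property for $<_{1},\dots,<_{h-1}$, and the same remains true after refining each $C_{j_{r}}$ to a subset.

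We then refine to incorporate $<_{h}$ by applying Theorem~\ref{thm:multi-dilworth} to $<_{h}$ on the union $\bigcup_{j}C_{j}$ and combining with pigeonhole over the $C_{j}$'s, aiming to extract, for some $k$ distinct indices $j_{1}<\dots<j_{k}$, subsets $A_{r}\subseteq C_{j_{r}}$ of the required size that are jointly chain-ordered or pairwise totally incomparable under $<_{h}$. The losses from this step are polynomial in $K$ and $\log n$, and with $K$ chosen appropriately, these losses multiply the exponent $12^{h}$ by a factor of at most $12$, yielding the required exponent $12^{h+1}$.

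The main obstacle is the final extraction: a naive application of Theorem~\ref{thm:multi-dilworth} to $<_{h}$ on $\bigcup_{j}C_{j}$ can produce subsets concentrated within a single $C_{j}$, so pigeonhole may fail to furnish $k$ distinct indices. Overcoming this requires either invoking Theorem~\ref{thm:multi-dilworth-general} to choose a trade-off point where the resulting subsets are forced to span many $C_{j}$'s, or handling the concentration case by recursing inside the dominating $C_{j}$ at the cost of an additional polynomial factor. The careful calibration of $K$, together with the choice of which form of the multipartite Dilworth theorem to invoke at each level, is what allows the exponent to grow by exactly a factor of $12$ per inductive step, absorbing all of the polynomial-in-$K$ and logarithmic losses within the margin between $12^{h}$ and $12^{h+1}$.
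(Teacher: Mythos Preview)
Your inductive framework matches the paper's: start with $K$ sets homogeneous for $<_{1},\dots,<_{h-1}$, apply Theorem~\ref{thm:multi-dilworth} with a parameter $k'$ on the union with respect to $<_{h}$, and try to extract $k$ subsets each contained in a single (and distinct) block $C_{j}$. You correctly isolate the key difficulty, namely that the sets $B_{1},\dots,B_{k'}$ produced by Theorem~\ref{thm:multi-dilworth} may concentrate in one block.

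However, neither of your proposed remedies closes this gap. Theorem~\ref{thm:multi-dilworth-general} only controls the \emph{sizes} of the $B_{r}$'s, not their distribution across the $C_{j}$'s; even if each $B_{r}$ is larger than $|C_{j}|$ and therefore meets many blocks, nothing prevents all of the $B_{r}$'s from meeting the \emph{same} collection of blocks in the same way, so you still cannot assign distinct indices $j_{1}<\dots<j_{k}$. Recursing inside a single dominating $C_{j}$ is worse: once you restrict to one block you lose the inter-block structure entirely, so the sets you produce there need not be homogeneous under $<_{1},\dots,<_{h-1}$ at all.

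The paper resolves this with a separate combinatorial tool (Lemma~\ref{lem:partition}): given the equal-sized partition $C_{1},\dots,C_{K}$ and the equal-sized disjoint sets $B_{1},\dots,B_{k'}$, a greedy sweep (essentially a discrete cake-cutting argument) produces indices $0=h_{0}<h_{1}<\dots<h_{k'/3}$ and $t_{1},\dots,t_{k'/3}$ such that each $B_{t_{j}}$ has a large intersection with the \emph{consecutive} block $\bigcup_{h_{j-1}<i\le h_{j}}C_{i}$. Because the blocks are consecutive and disjoint, the resulting sets are automatically homogeneous for $<_{1},\dots,<_{h-1}$ in index order. In the incomparable case for $<_{h}$ this already gives $k'/3$ good sets. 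In the comparable case the $t_{j}$'s need not be monotone, so one only knows that any two of these sets are comparable under $<_{h}$; the paper then applies Erd\H{o}s--Szekeres to extract a monotone chain of length $\sqrt{k'/3}$. This is why $k'=3k^{2}$ is chosen, and it is an ingredient your outline is missing entirely.
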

In \cite{FP}, a random construction is used to show that there exists
partial orderings $<_{1},\dots,<_{h}$ on $n$ elements such that
for any two sets $A,B$ which are either comparable or totally incomparable
in each of the partial orderings, we have $\min(|A|,|B|)\le\frac{C_hn(\log\log n)^{h-1}}{(\log n)^{h}}$, where $C_h$ is a positive constant depending only on $h$.
This shows that the polynomial dependency on $\log n$ in Theorem \ref{thm:multiple} is necessary.  An improved construction,  which removes the $\log \log n$ factors, was obtained by Kor\'andi and Tomon \cite{KT}. 

Theorem \ref{thm:multiple} leads to an improvement of a result of Kor\'andi, Pach and Tomon \cite{KPT2} on the existence of large homogeneous submatrices in a zero-one matrix avoiding certain $2\times k$ pattern. We say that a matrix $P$ is acyclic if every submatrix of $P$ has a row or column with at most one $1$. We say that $P$ is simple if $P$ is acyclic, and its complement $P^c$ obtained by switching $0$'s and $1$'s in $P$ is also acyclic. Kor\'andi, Pach and Tomon show that for any simple matrix $P$ of size $2\times k$, if an $n\times n$ matrix $A$ does not contain $P$ as a submatrix, then $A$ contains a submatrix of size $n2^{-(1+o(1))(\log \log n)^{k}} \times \Omega_k(n)$ whose entries are either all $0$ or all $1$. Their bound relies directly on a version of Theorem \ref{thm:multiple} for $h=2$, and Theorem \ref{thm:multiple} immediately leads to an improved bound of $\frac{n}{(10k\log n)^{1728}} \times \Omega_k(n)$ for the homogeneous submatrix. 

We next turn to discuss other geometric applications of Theorem \ref{thm:multiple}. An immediate corollary of Theorem \ref{thm:multiple} is that any collection of $n$ vertically convex sets contains two subcollections
$A$ and $B$ each of size at least $n/(\log n)^{10^6}$ such that
any set in $A$ intersects any set in $B$ or any set in $A$ is disjoint
from any set in $B$. 
Using other techniques, a stronger version of this corollary has already been shown in \cite{FPT}: any collection of $n$ convex sets in the plane contains two
subcollections $A$ and $B$ each of size at least $\Omega(n)$ such that
any set in $A$ intersects any set in $B$ or any set in $A$ is disjoint
from any set in $B$; and any collection of $n$ vertically convex
sets in the plane contains two subcollections $A$ and $B$ each of
size at least $\Omega\left(\frac{n}{\log n}\right)$ such that any set in $A$ intersects
any set in $B$ or any set in $A$ is disjoint from any set in $B$. 

The results on intersection graphs of vertically convex sets have also been generalized to \textit{string graphs}. A {\it string graph} is a graph whose vertices are curves in the plane, and two vertices are adjacent if and only if the two corresponding curves intersect. Using the result of \cite{FP2} showing that dense string graphs contain dense incomparability graphs, as well as the separator theorem for sparse string graphs \cite{L}, one can show that any string graph on $n$ vertices contains two subsets each of size at least $\Omega\left(\frac{n}{\log n}\right)$ which are empty or complete to each other. This result is tight up to the absolute constant, due to the construction of \cite{F}, and the observation that incomparability graphs are string graphs. In a recent breakthrough, Tomon \cite{T} shows that the family of string graphs has the Erd\H{o}s-Hajnal property. In another direction, Scott, Seymour and Spirkl \cite{SSS}, resolving a conjecture of the first author, shows that any perfect graph has two subsets of size at least $n^{1-o(1)}$ which are complete or empty to each other, generalizing the result of \cite{F} for incomparability graphs. 

We think that Theorem \ref{thm:multiple} should have further geometric applications. We also note that the proofs of our theorems easily give efficient polynomial time algorithms for finding the desired complete multipartite structures in partially ordered sets. In particular, these results should be applicable to some problems in computational geometry. We leave as an open problem the determination of  the optimal dependency on $k$ and $h$ in Theorem \ref{thm:multiple}. 

All logarithms are base $e$ unless otherwise specified.  For the sake of clarity of presentation, we sometimes often floor and ceiling signs whenever they are not crucial. 

\section{A multipartite analogue of Dilworth's Theorem}

Given a partially ordered set $(P,<)$ and an element $x\in P$, we
define $D_{P}(x)$ to be the set of elements $y\in P$ such that $y<x$,
and $U_{P}(x)$ the set of elements $y\in P$ such that $y>x$. We will prove the following more general result, from which Theorem \ref{thm:multi-dilworth} and Theorem \ref{thm:multi-dilworth-variant} easily follow. 
\begin{thm} \label{thm:multi-dilworth-general}
Let $f:\mathbb{Z}_+ \to \mathbb{R}_+$ be an increasing function such that $f(2)\ge 16$, and for all positive integers $k\ge 2$, we have $f(k) > 2f(\lfloor k/2\rfloor) + 6$ and $f(k) \ge 2f(\lceil k/2\rceil)$. Let $g:\mathbb{Z}_+\to \mathbb{R}_+$ be a decreasing function such that $g(2) \le \frac{1}{2}$, and for all $k\ge 2$, 
\[
g(k) \le \frac{\frac{1}{2}f(k)-f(\lfloor k/2\rfloor) - 3}{2k}. 
\]
Let $k\ge 2$ be a positive integer and let
$(P,<)$ be a partially ordered set on $n$ elements. Assume that $g(k)^2 n \ge 10^5 (kf(k)^2)$, then either
there exists $k$ disjoint subsets $A_{1},\dots,A_{k}$ of $P$ each
of size at least $\frac{1}{37} \frac{g(k)^2 n}{kf(k)^2}$ such that $A_{1}>A_{2}>\dots>A_{k}$,
or there exists $k$ disjoint subsets $A_{1},\dots,A_{k}$ of $P$
each of size at least $\frac{7n}{16kf(k)(\log n)}$ such that
$A_{1},\dots,A_{k}$ are pairwise totally incomparable. 
\end{thm}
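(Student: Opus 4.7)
I would proceed by strong induction on $k$. The base case $k=2$ is exactly the content of the main result of \cite{F}, namely $m_2(n) = \Omega(n/\log n)$, together with the trivial existence of two comparable singletons; the conditions $f(2) \ge 16$ and $g(2) \le 1/2$ appear to be tuned so that the general formulas reduce, at $k=2$, to constants matching the ones one can extract from \cite{F}. I would verify this matching explicitly at the start.

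For the inductive step with $k \ge 3$, my first move would be to apply the inductive hypothesis with parameter $k' = \lfloor k/2 \rfloor$ to $P$. This yields either a chain $B_1 > B_2 > \cdots > B_{k'}$ with each $|B_i| \ge \tfrac{g(k')^2 n}{37 k' f(k')^2}$, or $k'$ pairwise totally incomparable subsets $B_1,\dots,B_{k'}$ with each $|B_i| \ge \tfrac{7n}{16 k' f(k') \log n}$. Within each $B_i$, I would then apply the $k=2$ case (base case) internally, viewing $B_i$ as its own poset. In the clean scenarios — chain outer structure with every internal split being comparable, or incomparable outer structure with every internal split being incomparable — concatenating the refinements produces $2k' \ge k$ subsets of the uniform type, and picking $k$ of them gives the desired multipartite structure. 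The algebraic conditions $f(k) > 2f(\lfloor k/2 \rfloor) + 6$, $f(k) \ge 2f(\lceil k/2 \rceil)$, and the constraint on $g(k)$ in terms of $f(k)$ and $f(\lfloor k/2 \rfloor)$ are precisely the compatibility inequalities needed for the size bounds to propagate from level $k'$ to level $k$ with only a controlled loss.

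The main obstacle is the \emph{mixed} situation: the outer recursion gives structure of one type while some of the internal $k=2$ refinements yield the other type. I would deal with this by a dichotomy: if a constant fraction of the internal splits match the outer structure, a pigeonhole selection of $k$ of them still gives the uniform structure up to an additional constant-factor loss absorbed by $f(k)$. Otherwise, a constant fraction of the internal splits are ``against'' the outer structure, and I would argue that this abundance of the opposite type inside the $B_i$'s can be transferred into a direct construction of the opposite-type multipartite structure in $P$. For instance, if we are in the chain outer case but many $B_i$ contain large internally totally incomparable pairs, one may use those antichain-like pieces — using that different $B_i$'s are comparable, so incomparability within them is not globally usable, but the existence of many such pairs forces a global antichain-rich region — to build $k$ pairwise totally incomparable subsets by applying the base case once more to the union of the witnesses; the $\log n$ loss from this single extra base-case invocation is what forces the $\log n$ denominator only in the incomparable bound.

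The hardest part of the argument will be the bookkeeping in the mixed subcases, making sure that the losses from the outer induction, the inner base-case application, and the possible redirection in the mixed case all fit simultaneously into the target bounds $\tfrac{g(k)^2 n}{37 k f(k)^2}$ and $\tfrac{7n}{16 k f(k) \log n}$. I expect that each of the conditions on $f$ and $g$ stated in the theorem corresponds to exactly one of these inequalities — in particular the factor $\tfrac{1}{2}f(k) - f(\lfloor k/2 \rfloor) - 3$ in the denominator of the $g(k)$ constraint looks like it accounts for one halving loss plus a constant loss from the three branches of the mixed case. Once the correct decomposition of the losses is identified, the rest should be a careful but mechanical verification of inequalities and checking that the lower-bound hypothesis $g(k)^2 n \ge 10^5 k f(k)^2$ is strong enough to justify invoking the base case at each recursion level.
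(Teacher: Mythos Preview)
Your plan has a genuine gap: the induction does not close in the incomparable case. In the ``clean incomparable'' scenario --- the inductive hypothesis at $k'=\lfloor k/2\rfloor$ gives $k'$ pairwise totally incomparable sets $B_1,\dots,B_{k'}$ each of size at least $\tfrac{7n}{16k'f(k')\log n}$, and the internal $k=2$ call splits each $B_i$ into two totally incomparable pieces --- the inner application costs a further factor of $\log|B_i|\approx\log n$, so the resulting pieces have size on the order of $n/\bigl(f(k')f(2)(\log n)^2\bigr)$ rather than $n/\bigl(f(k)\log n\bigr)$. The inductive step therefore fails; unrolling what your scheme would actually prove yields a bound of order $n/(\log n)^{\Theta(\log k)}$, which is precisely the naive iterated bound $m_{2^t}(n)\ge c^t n/(\log n)^{2t-1}$ that the introduction mentions and that the theorem is meant to improve upon. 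A parallel compounding afflicts the comparable case: the factor $1/37$ from the inner base-case call multiplies the $1/37$ already present in the inductive hypothesis for $k'$, and with only $f(k)\ge 2f(\lceil k/2\rceil)$, $g(2)\le\tfrac12$, $f(2)\ge 16$ available, the deficit cannot be absorbed. Your mixed-case sketch also does not work as stated: if the outer structure is a chain $B_1>\cdots>B_{k'}$ and each $B_i$ contains a totally incomparable pair $(C_i,D_i)$, then $C_i$ and $C_j$ remain comparable for $i\neq j$, so these witnesses cannot be assembled into $k$ pairwise totally incomparable sets.

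The paper's proof is organised entirely differently and performs no induction at the level of the theorem. It first proves a dichotomy (Lemma~\ref{lem:comp}), via an auxiliary order $<_\ell$ and Mirsky's theorem: either the chain $A_1<\cdots<A_k$ of size-$\ell$ sets exists outright, or there is a subset $Q$ with $|Q|\ge 7|P|/(16k)$ in which every element has down-set (or up-set) of size at most $4\sqrt{|Q|\ell}$. Only in this sparse case are the $k$ incomparable sets built, using two explicit procedures \textrm{Condense} and \textrm{Select} (Lemma~\ref{lem:incomp}). The recursion on $k$ lives inside \textrm{Select}, but the target size $\gamma/(k\log|Q|)$ is fixed once at the outset and does not shrink through the recursion: the single $\log|Q|$ factor comes from a termination-time bound in \textrm{Condense} (Claim~\ref{claim:condense}), not from repeated invocations of a $k=2$ result. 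The hypotheses on $f$ and $g$ encode that the two halves $T$ and $Q\setminus(T\cup D_Q(T))$ in \textrm{Select} remain large enough to recurse and that the degree bound $\lambda\le g(\cdot)\gamma$ persists at every level --- not the loss bookkeeping you describe.
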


First, we show that Theorem \ref{thm:multi-dilworth} and Theorem \ref{thm:multi-dilworth-variant} follow from Theorem \ref{thm:multi-dilworth-general}. 
\begin{proof}[Proof of Theorem \ref{thm:multi-dilworth} and Theorem \ref{thm:multi-dilworth-variant} assuming Theorem \ref{thm:multi-dilworth-general}]
To prove Theorem \ref{thm:multi-dilworth}, we apply Theorem \ref{thm:multi-dilworth-general} with $f(k)=16(k-1)$ and $g(k) = \frac{1}{k}$ for all positive integers $k$. We verify that these choices of $f$ and $g$ satisfy the conditions in Theorem \ref{thm:multi-dilworth-general}. Clearly $f$ is increasing, $g$ is decreasing, and $f(2)\ge 16$. For all positive integers $k \ge 2$, 
\begin{align*}
2 f(\lceil k/2\rceil) &= 2 \cdot 16\left(\left\lceil\frac{k}{2}\right\rceil - 1\right) \\
&\le 2\cdot 16\left(\frac{k+1}{2}-1\right)\gamma \\ 
&= 16(k-1) \\
&= f(k),
\end{align*}
and 
\begin{align*}
\frac{\frac{1}{2}f(k)-f(\lfloor k/2\rfloor) - 3}{2k} &= \frac{8(k-1) - 16(\lfloor k/2\rfloor -1) - 3}{2k}\\
&\ge \frac{8(k-1)-16(k/2-1) - 3}{2k} \\
&\ge \frac{1}{k} \\
&= g(k).
\end{align*}
This also implies that $f(k) > 2f(\lfloor k/2\rfloor)+6$ for all $k\ge 2$. 

Similarly, Theorem \ref{thm:multi-dilworth-variant} follows from Theorem \ref{thm:multi-dilworth-general} upon choosing $f(k) = 8 k \log k$ and $g(k) = 1/2$, which can similarly be shown to satisfy the conditions in Theorem \ref{thm:multi-dilworth-general}. 
\end{proof}

Next, we prove Theorem \ref{thm:multi-dilworth-general}, showing that any partially
ordered set either contains $k$ large sets that are pairwise totally incomparable
or $k$ large sets that are pairwise comparable. 

Theorem \ref{thm:multi-dilworth-general} follows from the following two lemmas.  

The first lemma shows that if a poset does not contain a chain $A_1>\cdots >A_k$ of $k$ large sets,  then its comparability graph contains a large induced subgraph which is quite sparse.  Tomon \cite{T16} and Pach,  Rubin,  and Tardos (Lemma 2.1,  \cite{PRT}) prove results which show that if a poset does not contain a chain of $k$ large sets,  then its comparability graph cannot be very dense. 
\begin{lem}
\label{lem:comp}Let $k$ be an integer and let $(P,<)$ a partially
ordered set. Let $\ell<|P|/k$ be a positive integer.
Then either there exists $k$ disjoint subsets $A_{1},\dots,A_{k}$
of $P$ each of size at least $\ell$ such that $A_{1}<A_{2}<\dots<A_{k}$,
or there exists a subset $Q$ of $P$ such that $|Q|\ge\frac{7|P|}{16k}$
and $|D_{Q}(x)|<4\sqrt{|Q|\ell}$ for all $x\in Q$, or
there exists a subset $Q$ of $P$ such that $|Q|\ge\frac{7|P|}{16k}$
and $|U_{Q}(x)|<4\sqrt{|Q|\ell}$ for all $x\in Q$. 
\end{lem}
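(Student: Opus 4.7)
The plan is to prove Lemma~\ref{lem:comp} by contradiction: assume outcomes (a), (b), (c) all fail, and derive a contradiction. The strategy combines a greedy peeling by down-degree (aiming at outcome (b)) with the symmetric peeling by up-degree (aiming at outcome (c)), and then uses the ``middle'' elements surviving both peelings to build the chain of $k$ big sets.

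First I would perform the down-peeling: set $P_0 = P$ and, at step $t$, pick $y_t \in P_{t-1}$ maximizing $|D_{P_{t-1}}(y_t)|$, letting $P_t = P_{t-1}\setminus\{y_t\}$. Since outcome (b) is assumed to fail, whenever $|P_{t-1}| \geq 7|P|/(16k)$ some element must have down-degree at least $4\sqrt{|P_{t-1}|\ell}$, so the peeling continues at least until $|P_t|$ first drops below $7|P|/(16k)$. This peels more than $|P|(1-7/(16k))$ elements, each with down-set of size at least $\sqrt{7|P|\ell/k}$ in $P$ (because $D_{P_{t-1}}(y_t)\subseteq D_P(y_t)$). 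Repeating symmetrically with up-degrees (using the failure of (c)) yields the same number of elements with up-sets of size at least $\sqrt{7|P|\ell/k}$ in $P$. By inclusion-exclusion, the intersection $W$ of the two peeled sets has $|W| \geq |P|(8k-7)/(8k)$, and every $w \in W$ has both $|D_P(w)|$ and $|U_P(w)|$ at least $\sqrt{7|P|\ell/k}$.

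Next I would analyze the sub-poset on $W$. Its width must be less than $7|P|/(16k)$: otherwise, any antichain $A\subseteq W$ of that size would trivially witness outcome (b), since $|D_A(x)|=0$ for all $x \in A$. By Dilworth's theorem, $W$ then contains a chain of length greater than $|W|$ divided by its width, which is of order $k$. From this chain I would choose $k-1$ ``divider'' elements $w_1 < w_2 < \cdots < w_{k-1}$, and take the $k$ sets $A_1 \subseteq D_P(w_1)$, $A_j \subseteq \{z \in P : w_{j-1} < z < w_j\}$ for $1 < j < k$, and $A_k \subseteq U_P(w_{k-1})$, each of size exactly $\ell$. These sets satisfy $A_1 < A_2 < \cdots < A_k$ because $A_j$ lies strictly between $w_{j-1}$ and $w_j$ (with $w_0 = -\infty$ and $w_k = +\infty$), contradicting the failure of (a).

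The main obstacle will be the last step: arranging that each of the $k$ slabs contains at least $\ell$ elements of $P$ simultaneously. The two extreme slabs $D_P(w_1)$ and $U_P(w_{k-1})$ are fine, since each middle element has at least $\sqrt{7|P|\ell/k}\geq \ell$ elements above and below it (using $\ell<|P|/k$). The intermediate slabs are more delicate: the Dilworth chain in $W$ may only have length $\sim 2k$, giving just $O(1)$ elements of $W$ between consecutive candidate dividers, which is insufficient when $\ell$ is large. The resolution is that the intermediate slabs are taken in the full poset $P$, not inside $W$, and their sizes sum to at most $|P|$. Then an averaging or greedy argument over the choice of dividers along the Dilworth chain, together with $\ell < |P|/k$, should produce $k-1$ dividers whose induced slabs each contain at least $\ell$ elements. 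If straightforward averaging is not quite enough to close the bound, the same argument may need to be iterated — for instance, by applying Dilworth once more inside a long interval between two tentative dividers, or by exploiting the peeling order to bias the choice of dividers toward elements with well-separated up- and down-sets in $P$.
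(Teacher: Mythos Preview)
Your argument has a genuine gap in the final step, and the suggested patches do not close it. From the failure of (b) and (c) you obtain a set $W$ of width $<7|P|/(16k)$, hence a chain $w_1<\cdots<w_L$ in $W$ of length $L>\frac{2(8k-7)}{7}\approx\frac{16k}{7}$; you also know $|D_P(w_i)|,|U_P(w_i)|\ge\sqrt{7|P|\ell/k}$ for each $i$. But nothing here forces the open intervals $(w_{i_{j-1}},w_{i_j})=\{z\in P:w_{i_{j-1}}<z<w_{i_j}\}$ to be large. The ``averaging'' you propose would require a lower bound on the total size of these intervals, yet elements of $P$ incomparable to the chain lie in none of them, so there is no reason the intervals should cover $\gtrsim k\ell$ elements. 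The nested down-sets do not help either: $D_P(w_j)\setminus D_P(w_i)$ may consist entirely of elements incomparable to $w_i$, so it need not sit inside $(w_i,w_j)$. With only $\sim 2k$ chain elements, the only guaranteed content of an interval between two chosen dividers is the $O(1)$ chain elements between them, which is far short of $\ell$. Iterating Dilworth inside an interval, or appealing to the peeling order, does not produce the missing lower bound on interval sizes.

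For contrast, the paper bypasses this obstruction by building the ``$\ell$ elements between'' requirement into the order from the start. It defines $x<_\ell y$ to mean there are at least $\ell$ elements $z$ with $x<z<y$, and applies Mirsky to $(P,<_\ell)$: a $(k+1)$-chain in $<_\ell$ gives outcome~(a) immediately, and otherwise there is a $<_\ell$-antichain $P'$ of size $\ge |P|/k$. The point is that on $P'$ every pair has fewer than $\ell$ elements between them, so the triple count $\sum_{z\in P'}|D_{P'}(z)|\,|U_{P'}(z)|<\binom{|P'|}{2}\ell$ forces most elements of $P'$ to have $\min(|D_{P'}|,|U_{P'}|)<2\sqrt{|P'|\ell}$, and the larger half gives $Q$. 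Your peeling heads in a dual direction (isolating elements with \emph{large} $D_P$ and $U_P$), but without the $<_\ell$ structure you lack the lever that turns a long ordinary chain into a chain of large sets.
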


\begin{lem}
\label{lem:incomp}Let $k\ge2$ be a positive integer and let $(Q,<)$
a partially ordered set. Let $\lambda\ge 0$ and $\gamma\ge 1$ be so that $\max_{x\in Q}|D_{Q}(x)|\le\lambda$, $\gamma\le\frac{|Q|}{f(k)}$ and $\lambda \le g(k)\gamma$. Then there exists $k$ disjoint subsets $A_{1},A_{2},\dots,A_{k}$ of $Q$ which are pairwise totally 
incomparable and each has size at least $\frac{\gamma}{k\log|Q|}$. 
\end{lem}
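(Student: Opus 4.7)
The plan is to prove Lemma \ref{lem:incomp} by strong induction on $k$.

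\textbf{Base case ($k=2$).} The hypotheses give $\lambda \le g(2)\gamma \le \gamma/2$ and $\gamma \le |Q|/f(2) \le |Q|/16$, so the target size $\gamma/(2\log|Q|) \le |Q|/(32\log|Q|)$ is bounded above by a constant times $|Q|/\log|Q|$. Fox's bipartite version of Dilworth's theorem from \cite{F} guarantees, in any poset on $|Q|\ge 2$ elements, two disjoint totally incomparable subsets of size $\Omega(|Q|/\log|Q|)$. A careful constant-tracking (using $\gamma \le |Q|/16$) closes this case.

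\textbf{Inductive step ($k\ge 3$).} Set $k_1 = \lceil k/2\rceil$ and $k_2 = \lfloor k/2\rfloor$, so $k_1+k_2 = k$. The key subclaim I would establish is that there exist disjoint, totally incomparable subsets $X, Y \subseteq Q$ with $|X| \ge f(k_1)\gamma$ and $|Y| \ge f(k_2)\gamma$. Given the subclaim, the inductive hypothesis applies to $(X,<)$ with parameters $(k_1,\gamma,\lambda)$: the bound $\gamma \le |X|/f(k_1)$ holds by construction, and $\lambda \le g(k)\gamma \le g(k_1)\gamma$ since $g$ is decreasing and $k_1 \le k$. This yields $k_1$ pairwise totally incomparable subsets of $X$, each of size at least $\gamma/(k_1\log|X|) \ge \gamma/(k\log|Q|)$, using $k_1 \le k$ and $|X| \le |Q|$. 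Symmetrically, applying induction to $(Y,<)$ with $(k_2,\gamma,\lambda)$ produces $k_2$ further subsets meeting the same lower bound. Since $X$ and $Y$ are themselves totally incomparable, the resulting $k_1+k_2 = k$ subsets are pairwise totally incomparable in $Q$.

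\textbf{Producing the subclaim.} The sparsity hypothesis $\lambda \le g(k)\gamma$ is essential here. A natural approach is to use a height function on $Q$: by Mirsky's theorem the longest chain has length at most $\lambda+1$, so $Q$ admits a decomposition into at most $\lambda+1$ antichain levels $L_1,\dots,L_h$. One then tentatively assigns low levels to $X$ and high levels to $Y$, and removes a boundary set $Z$ of elements responsible for cross-level comparabilities. The aim is a budget $|Z| \le 2k\lambda + O(\gamma)$. Combined with $|Q| \ge f(k)\gamma$ and the recursive conditions $f(k) \ge 2f(\lceil k/2\rceil)$ and $g(k) \le (\tfrac{1}{2}f(k) - f(\lfloor k/2\rfloor) - 3)/(2k)$, one gets $|Q| - |Z| \ge f(k)\gamma - 2kg(k)\gamma - O(\gamma) \ge f(k_1)\gamma + f(k_2)\gamma$, leaving room to carve out $X$ and $Y$ at the required sizes. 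The ``3'' and the factor $2k$ appearing in the condition on $g$ look tailored precisely to this boundary-removal budget.

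\textbf{Main obstacle.} The technical crux is the subclaim: producing $X,Y$ totally incomparable, of linear-in-$|Q|$ size, with residue $|Z|$ controlled by $O(k\lambda)$. The one-sided bound $\max_{x\in Q}|D_Q(x)| \le \lambda$ must be exploited in a nontrivial way to certify total incomparability across a cut without losing too many elements; this is more delicate than a random or purely greedy partition, which would lose a prohibitive $\Omega(\lambda|Q|)$ elements from the boundary. Once the subclaim is in hand, the remainder is a routine verification that the induction hypothesis applies with unchanged $\gamma$ and $\lambda$, and a one-line bookkeeping step propagating the bound $\gamma/(k\log|Q|)$ through the recursion.
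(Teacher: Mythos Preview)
Your inductive skeleton matches the paper's recursion (their ``Select'' procedure also splits into $\lceil k/2\rceil$ and $\lfloor k/2\rfloor$ and recurses on two totally incomparable pieces), but the proposal has a genuine gap at exactly the point you flag as the ``Main obstacle'': you have not proved the subclaim, and the Mirsky/height-cut idea you sketch cannot by itself deliver the budget $|Z|\le 2k\lambda+O(\gamma)$. If you cut $Q$ at any height (or, more usefully, take $T$ to be the top half of a linear extension), the set you must discard from the bottom is $D_Q(T)$, and nothing in the hypotheses prevents $|D_Q(T)|$ from being of order $|Q|$, not $O(k\lambda)$. The one-sided bound $|D_Q(x)|\le\lambda$ controls the down-degree of each vertex, but $T$ has $|Q|/2$ vertices, so $|D_Q(T)|$ can be as large as $|Q|/2$.

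The paper resolves this with a dichotomy you are missing. Let $T$ be the top $\lceil |Q|/2\rceil$ elements in a linear extension. If $|D_Q(T)|<2(k\lambda+\gamma)$, then your subclaim holds with $X=T$ and $Y=Q\setminus(T\cup D_Q(T))$, and the arithmetic you wrote out goes through. If instead $|D_Q(T)|\ge 2(k\lambda+\gamma)$, the paper abandons the recursion entirely and runs a separate ``Condense'' procedure: iteratively equipartition $T$ into $k$ blocks $B_1,\dots,B_k$, and either all the ``private shadows'' $D_Q(B_i)\setminus D_Q(T\setminus B_i)$ already have size $\ge\gamma/(k\log|Q|)$ (and these are pairwise totally incomparable, done), or one private shadow is small and that block is deleted. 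The point is that each deletion shrinks $|T|$ geometrically while shrinking $|D_Q(T)|$ by at most $\gamma/(k\log|Q|)$, so after $O(k\log|Q|)$ rounds $|T|\le 2k$, forcing $|D_Q(T)|\le 2k\lambda$, which contradicts $|D_Q(T)|>2k\lambda$. This Condense idea is the missing ingredient; without it (or something equivalent) the large-boundary case is unhandled and the induction does not close.

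A secondary remark: your base case outsources $k=2$ to \cite{F}/\cite{FPT} and then argues that sparsity rules out the comparable alternative. That is morally fine, but the explicit constant there is not obviously $\ge 1/32$, and in any case the paper's argument is self-contained: the same linear-extension-plus-Condense dichotomy already handles $k=2$ directly.
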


We next give the proof of Theorem \ref{thm:multi-dilworth-general} assuming Lemma \ref{lem:comp} and Lemma \ref{lem:incomp}. 

\begin{proof}[Proof of Theorem \ref{thm:multi-dilworth-general}]
Choose $\ell=\left \lceil \frac{1}{37} \frac{g(k)^2 n}{kf(k)^2} \right \rceil$. By Lemma \ref{lem:comp},
we can either find $k$ sets $A_{1},\dots,A_{k}$ each of size at
least $\ell$ such that $A_{1}>A_{2}>\dots>A_{k}$, or we can find
a subset $Q$ of $P$ with $|Q|\ge\frac{7n}{16k}$ and $|D_{Q}(x)|\le 4\sqrt{|Q|\ell}$
for all $x\in Q$, or we can find
a subset $Q$ of $P$ with $|Q|\ge\frac{7n}{16k}$ and $|U_{Q}(x)|\le 4\sqrt{|Q|\ell}$
for all $x\in Q$. 

In the first case, the conclusion of Theorem \ref{thm:multi-dilworth} holds. We next consider the second case where we can find
a subset $Q$ of $P$ with $|Q|\ge\frac{7n}{16k}$ and $|D_{Q}(x)|\le 4\sqrt{|Q|\ell}$ for all $x\in Q$. The third case can be treated similarly.

Note that $$4\sqrt{|Q|\ell} = 4|Q|\sqrt{\frac{\ell}{|Q|}} \le 4|Q|\sqrt{\frac{\ell}{7n/(16k)}}\le g(k)|Q|/f(k),$$ by the choice of $\ell$ and the assumption $g(k)^2 n \ge 10^5 kf(k)^2$. By Lemma \ref{lem:incomp} with $\gamma = |Q|/f(k) \ge 1$ and $\lambda = g(k)\frac{|Q|}{f(k)} = g(k)\gamma$, we can find $k$ pairwise totally
incomparable subsets of $Q$ each of size at least $\frac{|Q|/f(k)}{k\log |Q|}\ge\frac{7n}{16kf(k)(\log n)}$. Thus, the conclusion of Theorem \ref{thm:multi-dilworth} also holds in this case.
\end{proof}

We next prove Lemma \ref{lem:comp}.
\begin{proof}[Proof of Lemma \ref{lem:comp}]
Define the partial ordering $<_{\ell}$ on $P$ such that $x<_{\ell}y$
if and only if there exists $\ell$ distinct elements $a_{1},\dots,a_{\ell}\in P$
such that $x<a_{j}<y$ for all $j\in[\ell]$. Assume that there exists
a chain $x_{1}<_{\ell}x_{2}<_{\ell}\dots<_{\ell}x_{k+1}$ of length
$k+1$ in $(P,<_{\ell})$. Then there are distinct elements $a_{i,j}$
for $i\in[k]$, $j\in[\ell]$ such that $x_{i}<a_{i,j}<x_{i+1}$ for
all $i\in[k],j\in[\ell]$. The subsets $A_{i}=\{a_{i,j},j\in[\ell]\}$
then satisfy $A_{1}<A_{2}<\dots<A_{k}$. Thus, if there exists a chain
of length $k+1$ in $(P,<_{\ell})$, then we obtain $k$ subsets $A_{1},\dots,A_{k}$
of $P$ each of size at least $\ell$ such that $A_{1}<A_{2}<\dots<A_{k}$.

Otherwise, there does not exist a $(k+1)$-chain in $(P,<_{\ell})$,
so by Mirsky's Theorem (the dual of Dilworth's Theorem), there exists
a partition of $P$ to at most $k$ antichains in $(P,<_{\ell})$.
Thus, there exists an antichain in $(P,<_{\ell})$ of size at least
$\frac{|P|}{k}$. Let $P'$ be the elements of this antichain and let
$n'=|P'|$. For any $x,y\in P'$, there are less than $\ell$ elements
$z\in P'$ such that $x<z<y$. Thus the number of triples $(x,y,z)\in P'^{3}$
such that $x<z<y$ is at most $\binom{n'}{2}\ell<\frac{n'^{2}\ell}{2}$.
On the other hand, the number of triples $(x,y,z)\in P'^{3}$ such
that $x<z<y$ is equal to $\sum_{z\in P'}|D_{P'}(z)|\cdot|U_{P'}(z)|$.
Thus the number of elements $x\in P'$ with $\min(|D_{P'}(x)|,|U_{P'}(x)|)\ge2\sqrt{n'\ell}$
is at most $n'/8$. Hence, either at least $7n'/16$ elements $x\in P'$
satisfies $|D_{P'}(x)|<2\sqrt{n'\ell}$, or at least $7n'/16$ elements
$x\in P'$ satisfies $|U_{P'}(x)|<2\sqrt{n'\ell}$. Without loss of
generality, assume that at least $7n'/16$ elements $x\in P'$ satisfies
$|D_{P'}(x)|<2\sqrt{n'\ell}$. Let $Q$ be the set of elements $x\in P'$
with $|D_{P'}(x)|<2\sqrt{n'\ell}$. Then $Q$ has size at least $7n'/16$
and for all $x\in Q$, $|D_{Q}(x)|\le|D_{P'}(x)|<2\sqrt{n'\ell}<4\sqrt{|Q|\ell}$.
\end{proof}

For a subset $S$ of a partially ordered set $(Q,<)$, we denote by $D_{Q}(S)$ the set of elements $x\notin S$ such that there exists $s\in S$
with $s>x$. Given a set $S$ and a positive integer $k$, an {\it equitable partition} of $S$ into $k$ parts is a partition of $S$ into $k$ disjoint subsets each of size $\lfloor |S|/k\rfloor$ or $\lceil |S|/k\rceil$. We prove Lemma \ref{lem:incomp} by considering the following algorithms. For both algorithms below, we fix positive parameters $\gamma$ and $\lambda$ that the algorithms rely on which do not change throughout the execution of the algorithms. In the inputs to Algorithm 1, $B$ is a subset of a partially ordered set $Q$.

$\hfill$

\noindent $\textrm{Algorithm 1: Condense}(Q,B,k)$. 
\begin{enumerate}
\item If $|B| < k$, output $k$ empty sets. 
\item Otherwise, $|B| \ge k$. Consider an arbitrary equitable partition of $B$ into $k$ sets $B_{1},\dots,B_{k}$. 
\item If $|D_{Q}(B_{i})\setminus D_{Q}(B\setminus B_{i})|\ge\gamma/(k\log|Q|)$
for all $i=1,2,\dots,k$, output the sets $D_{Q}(B_{i})\setminus D_{Q}(B\setminus B_{i})$
for $i=1,2,\dots,k$.
\item Otherwise, there exists $i\in\{1,2,\dots,k\}$ such that $|D_{Q}(B_{i})\setminus D_{Q}(B\setminus B_{i})|<\gamma/(k\log|Q|)$. Let $i$ be the smallest such index and output $\textrm{Condense}(Q,B\setminus B_{i},k)$. 
\end{enumerate}

$\hfill$

\noindent $\textrm{Algorithm 2: Select}(Q,k)$. 
\begin{enumerate}
\item If $k=1$, output $Q$.
\item If $k>1$, consider an arbitrary linear extension of $Q$. Let $T$
be the top $\lceil|Q|/2\rceil$ elements in the linear ordering. 
\begin{itemize}
\item If $|D_{Q}(T)|\ge 2(k\lambda+\gamma)$, run $\textrm{Condense}(Q,T,k)$ and return its output.
\item If $|D_{Q}(T)| < 2(k\lambda+\gamma)$, run $\textrm{Select}(T,\lceil k/2\rceil)$
and $\textrm{Select}(Q\setminus(T\cup D_Q(T)),\lfloor k/2\rfloor)$
and return $k=\lceil k/2\rceil+\lfloor k/2\rfloor$ sets from the
two outputs. 
\end{itemize}
\end{enumerate}

Observe that in Algorithm 1 ($\textrm{Condense}(Q,B,k)$), if the algorithm terminates in Step 3, then the sets $D_Q(B_i)\setminus D_Q(B\setminus B_i)$ for $i=1,2,\dots,k$ are pairwise totally incomparable, since if $x_i \in D_Q(B_i)\setminus D_Q(B\setminus B_i)$, $x_j \in D_Q(B_j)\setminus D_Q(B\setminus B_j)$ for $i\ne j$ and $x_i < x_j$, then $x_i \in D_Q(B_j)$, which contradicts the fact that $x_i \notin D_Q(B\setminus B_i)$. In Claim \ref{claim:condense} below, we show that under appropriate conditions, $\textrm{Condense}(Q,B,k)$ always terminates in Step 3 and outputs $k$ totally incomparable sets of large size. Claim \ref{claim:select} below verifies that Algorithm 2 ($\textrm{Select}(Q,k)$) outputs $k$ large pairwise totally incomparable sets. In particular, in Algorithm 2, if we are in the first case of Step 2, then we output $k$ large pairwise totally incomparable sets by Claim \ref{claim:condense}. Otherwise, $|D_Q(T)|$ is small, so the sets $T$ and $Q\setminus (T\cup D_Q(T))$ are both large, and we obtain the desired outputs from the recursive calls to Algorithm 2. 

We now state and prove Claim \ref{claim:condense} and Claim \ref{claim:select}, from which the proof of Lemma \ref{lem:incomp} follows. 
\begin{claim}
\label{claim:condense}If $|D_{Q}(x)|\le\lambda$ for all
$x\in Q$, and $B\subseteq Q$ satisfies $|B|\ge k$ and $|D_{Q}(B)|\ge2(k\lambda+\gamma)$, then $\textrm{Condense}(Q,B,k)$ outputs $k$ totally incomparable
sets each of size at least $\gamma/(k\log|Q|)$. 
\end{claim}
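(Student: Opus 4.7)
My plan is to analyze the recursive calls of $\text{Condense}(Q,B,k)$ by tracking the chain $B = B^{(0)} \supseteq B^{(1)} \supseteq \cdots \supseteq B^{(T)}$ of inputs along the recursion, where $B^{(t+1)} = B^{(t)} \setminus B^{(t)}_{i_t}$ at depth $t$ using the index $i_t$ chosen in Step~4, and the algorithm halts at depth $T$ either in Step~1 (when $|B^{(T)}|<k$) or in Step~3. If Step~3 fires, the outputs $D_Q(B^{(T)}_i) \setminus D_Q(B^{(T)} \setminus B^{(T)}_i)$ are pairwise totally incomparable by the remark preceding the claim, and each has size at least $\gamma/(k\log|Q|)$ by the Step~3 test; hence it suffices to rule out termination in Step~1.

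The technical core is the slow-decay estimate $|D_Q(B^{(t+1)})| \ge |D_Q(B^{(t)})| - \gamma/(k\log|Q|)$. I would prove this via the inclusion $D_Q(B) \setminus D_Q(B') \subseteq D_Q(B_i) \setminus D_Q(B')$ for $B' = B \setminus B_i$: indeed, any $x \in D_Q(B) \setminus D_Q(B')$ lies below some element of $B$ but no element of $B'$, hence below only elements of $B_i$, so $x \in D_Q(B_i)$ (and $x \notin B_i \subseteq B$). The Step~4 criterion bounds $|D_Q(B_i) \setminus D_Q(B')| < \gamma/(k\log|Q|)$, giving the decay inequality; iterating yields
\[
|D_Q(B^{(T)})| \ge |D_Q(B)| - T\gamma/(k\log|Q|) \ge 2(k\lambda+\gamma) - T\gamma/(k\log|Q|).
\]

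To bound $T$, the equitable partition guarantees $|B^{(t+1)}| \le |B^{(t)}| - \lfloor |B^{(t)}|/k \rfloor$, so by a routine induction $|B^{(t)}| - k \le (|B|-k)(1-1/k)^t$; combined with $\log\bigl(k/(k-1)\bigr) \ge 1/k$ this yields $T \le k\log|Q| + O(1) \le 2k\log|Q|$ in the regime of interest. Consequently $|D_Q(B^{(T)})| \ge 2k\lambda$. If Step~1 had triggered the termination, then $|B^{(T)}| \le k-1$ and the pointwise bound gives $|D_Q(B^{(T)})| \le (k-1)\lambda$; hence $(k-1)\lambda \ge 2k\lambda$, forcing $\lambda=0$, but then $D_Q(B) = \emptyset$ contradicts $|D_Q(B)| \ge 2(k\lambda+\gamma) > 0$. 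So termination must occur in Step~3.

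The main obstacle I anticipate is the bookkeeping that connects the two kinds of downsets: the Step~4 criterion only constrains the \emph{exclusive} downset $|D_Q(B_i) \setminus D_Q(B')|$, whereas the argument needs to control the actual shrinkage $|D_Q(B)| - |D_Q(B')|$ from one recursive call to the next. The inclusion $D_Q(B) \setminus D_Q(B') \subseteq D_Q(B_i) \setminus D_Q(B')$ is precisely the bridge; once this is in place, the rest is standard geometric-decay accounting together with the pointwise bound $|D_Q(x)| \le \lambda$.
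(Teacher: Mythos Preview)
Your argument is essentially the same as the paper's: track the recursion $B=B^{(0)}\supseteq B^{(1)}\supseteq\cdots$, use the Step~4 criterion to show $|D_Q(B^{(t)})|$ loses less than $\gamma/(k\log|Q|)$ per step, bound the depth by $2k\log|Q|$ via geometric decay of $|B^{(t)}|$, and contradict the pointwise bound $|D_Q(x)|\le\lambda$. The paper differs only in that it stops at the first step $s$ with $k\le|B^{(s)}|\le 2k$ and derives the contradiction there ($|D_Q(B^{(s)})|\le 2k\lambda$ versus $>2k\lambda$), which sidesteps your separate $\lambda=0$ case.

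One small correction: your bound ``$T\le k\log|Q|+O(1)$'' is not quite right as stated. From $|B^{(t)}|-k\le(|B|-k)(1-1/k)^t$ you can only conclude that the first time $|B^{(t)}|\le 2k$ occurs within $k\log|Q|$ steps; after that the process may still take up to $k$ further steps before $|B^{(t)}|$ drops below $k$ (since the removed part can have size as small as~$1$). So the additive term is $O(k)$, not $O(1)$. This does not affect your final bound $T\le 2k\log|Q|$, which holds once $\log|Q|\ge 2$, a condition that is satisfied in the regime where the claim is applied.
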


\begin{proof}
Observe that in one iteration of $\textrm{Condense}(Q,B,k)$, either the algorithm stops in Step 1 or 3 and outputs $k$ disjoint and pairwise comparable sets, or it recursively calls and outputs $\textrm{Condense}(Q,B',k)$, where $B'$ is a particular subset of $B$. If the algorithm has not stopped and outputted $k$ sets after $j$ recursive calls, then it outputs $\textrm{Condense}(Q,B^{(j)},k)$ for some subset $B^{(j)}$ of $B$. Specifically, let $B=B^{(0)}$ and, for $j\ge 0$, if $\textrm{Condense}(Q,B^{(j)},k)$ does not stop in Step 1 or 3, then in Step 4 it outputs $\textrm{Condense}(Q,B^{(j+1)},k)$, where $B^{(j+1)}=B^{(j)}\setminus B_{i}^{(j)}$ and $B_{i}^{(j)}$ is one set in an equipartition of $B^{j}$ into $k$ sets such that 
\begin{equation}
|D_{Q}(B^{(j)})\setminus D_{Q}(B^{(j)}\setminus B_{i}^{(j)})|<\gamma/(k\log|Q|). \label{eq:boundD}
\end{equation}Note that if $|B^{(j)}|\ge k$, then 
\begin{equation}
|B_{i}^{(j)}|\ge\lfloor|B^{(j)}|/k\rfloor \ge |B^{(j)}|/(2k), \label{eq:boundlow}
\end{equation}and
\begin{equation}
|B_{i}^{(j)}| \le \lceil |B^{(j)}|/k\rceil.\label{eq:boundup}
\end{equation}
In particular, Inequality (\ref{eq:boundlow}) implies that $\textrm{Condense}(Q,B,k)$ must terminate at some number of recursive calls $t\ge 0$. 

Clearly, if $\textrm{Condense}(Q,B^{(t)},k)$ terminates in Step 3, then we output $k$ totally incomparable sets each of size at least $\gamma/(k\log|Q|)$, which completes the proof of the claim in this case. Assume for the sake of contradiction that $\textrm{Condense}(Q,B^{(t)},k)$ terminates but not in Step 3, so $\textrm{Condense}(Q,B^{(t)},k)$ terminates in Step 1 and hence $|B^{(t)}|<k$. 

As $|B^{(0)}|=|B| \geq k > |B^{(t)}|$ and as (\ref{eq:boundlow}) and (\ref{eq:boundup}) hold for $0 \leq j<t$, there is a smallest integer $s$ such that $k \leq |B^{(s)}| \leq 2k$. Here, we note that $x - \lceil x/k\rceil \ge k$ for all $x\ge 2k+1$ and $k\ge 2$. For $j\le s$, we have
 $\textrm{Condense}(Q,B^{(j)},k)$ does not terminate
in Step 3, $|B^{(j+1)}|\le\left(1-\frac{1}{2k}\right)|B^{(j)}|$ by (\ref{eq:boundlow}),
and $|D_{Q}(B^{(j)})|-|D_{Q}(B^{(j+1)})|\le\gamma/(k\log|Q|)$ by (\ref{eq:boundD}). In particular, we have $k\le |B^{(s)}|\le\left(1-\frac{1}{2k}\right)^{s}|B^{(0)}|\le \left(1-\frac{1}{2k}\right)^{s}|Q|$, so $s\le 2k\log |Q|$. Thus, we obtain
\[
|D_{Q}(B^{(s)})|>2(k\lambda+\gamma)-s\gamma/(k\log|Q|)\ge2(k\lambda+\gamma)-2\gamma = 2k\lambda.
\]
However, $|D_{Q}(B^{(s)})|\le2k\lambda$ since $|D_{Q}(x)|\le\lambda$
for all $x\in Q$ and $|B^{(s)}| \le 2k$. This contradiction shows that $\textrm{Condense}(Q,B^{(j)},k)$ must terminate in Step 3 for some $j\in [0,t]$ and output $k$ totally incomparable sets each of size at least $\gamma/(k\log|Q|)$. 
\end{proof}

\begin{claim}
\label{claim:select}Let $k\ge 2$ be a positive integer. If $|Q| \ge f(k)\gamma$, and $|D_{Q}(x)|\le\lambda\le g(k)\gamma$
for all $x\in Q$, $\textrm{Select}(Q,k)$ returns $k$ totally incomparable
sets each of size at least $\gamma/(k\log|Q|)$. 
\end{claim}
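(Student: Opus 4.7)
The plan is to prove Claim \ref{claim:select} by strong induction on $k$. The recursion bottoms out at $k=1$, where $\textrm{Select}(Q,1)$ simply returns $Q$; this is trivially a single pairwise‑incomparable family, and the size constraint $|Q|\ge\gamma/\log|Q|$ is easy to check under our hypotheses. For $k\ge 2$, let $T$ be the top $\lceil|Q|/2\rceil$ elements in an arbitrary linear extension of $Q$, and split into two cases based on whether $|D_Q(T)|\ge 2(k\lambda+\gamma)$.

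In the first case $|D_Q(T)|\ge 2(k\lambda+\gamma)$, the algorithm returns the output of $\textrm{Condense}(Q,T,k)$. To apply Claim \ref{claim:condense} it suffices to verify $|T|\ge k$: a short induction using $f(2)\ge 16$ and $f(k)\ge 2f(\lceil k/2\rceil)$ gives $f(k)\ge 8k$, so $|T|\ge|Q|/2\ge f(k)\gamma/2\ge 4k\gamma\ge k$ (using $\gamma\ge 1$). Claim \ref{claim:condense} then outputs $k$ pairwise totally incomparable sets each of size at least $\gamma/(k\log|Q|)$, as required.

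In the remaining case $|D_Q(T)|<2(k\lambda+\gamma)$, the algorithm recurses on $T$ with parameter $\lceil k/2\rceil$ and on $Q':=Q\setminus(T\cup D_Q(T))$ with parameter $\lfloor k/2\rfloor$. The hypotheses for the first recursion are straightforward: $|T|\ge|Q|/2\ge f(\lceil k/2\rceil)\gamma$ from $f(k)\ge 2f(\lceil k/2\rceil)$, and $|D_T(x)|\le\lambda\le g(\lceil k/2\rceil)\gamma$ by monotonicity of $g$. For the second recursion the key estimate is
\[
|Q'|\ge \lfloor|Q|/2\rfloor - 2(k\lambda+\gamma)\ge \tfrac{1}{2}f(k)\gamma - \tfrac{1}{2} - (2kg(k)+2)\gamma \ge f(\lfloor k/2\rfloor)\gamma,
\]
where the last inequality uses the sharp constraint $g(k)\le(\tfrac{1}{2}f(k)-f(\lfloor k/2\rfloor)-3)/(2k)$ together with $\gamma\ge 1$. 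By induction each recursive call produces the required number of pairwise totally incomparable subsets of size at least $\gamma/(k\log|Q|)$, using $\lceil k/2\rceil\log|T|\le k\log|Q|$.

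It remains to argue that a set from the $T$‑recursion is totally incomparable with a set from the $Q'$‑recursion. For $x\in T$ and $y\in Q'$, the relation $x<y$ would place $x$ below $y$ in the linear extension, contradicting $x\in T$ (the top half) and $y\notin T$; and $y<x$ would place $y$ in $D_Q(T)$, contradicting $y\in Q'$. Thus the $\lceil k/2\rceil+\lfloor k/2\rfloor=k$ outputted sets are pairwise totally incomparable, completing the induction. I expect the main bookkeeping obstacle to be the size estimate for $Q'$: it is calibrated exactly to the recursive inequality imposed on $g(k)$ in Theorem \ref{thm:multi-dilworth-general}, and once this inequality is in place the rest of the induction runs mechanically.
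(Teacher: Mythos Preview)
Your approach matches the paper's: strong induction on $k$, invoking Claim~\ref{claim:condense} when $|D_Q(T)|\ge 2(k\lambda+\gamma)$, and otherwise recursing on $T$ and $Q':=Q\setminus(T\cup D_Q(T))$ with the size estimates verified via the defining inequalities on $f$ and $g$. The total-incomparability argument between the two recursive outputs is exactly right, and your bookkeeping for $|Q'|\ge f(\lfloor k/2\rfloor)\gamma$ is the same computation the paper does.

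There is one small gap in your base case. The claim is stated only for $k\ge 2$, and the hypotheses on $f$ and $g$ place no useful lower bound on $f(1)$: monotonicity gives only $f(1)<f(2)$, and the recursive inequality at $k=2$ gives $f(2)>2f(1)+6$, which permits $f(1)$ to be arbitrarily close to $0$. So your proposed $k=1$ base case---that ``$|Q|\ge\gamma/\log|Q|$ is easy to check under our hypotheses''---does not go through, since $|Q|\ge f(1)\gamma$ need not imply $|Q|\ge\gamma/\log|Q|$. Consequently, when you invoke the inductive hypothesis at $\lfloor k/2\rfloor=1$ (i.e., from $k\in\{2,3\}$), the appeal is not justified. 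The paper handles this by treating $k=2$ and $k=3$ as explicit base cases and verifying directly that $|T|$ and $|Q'|$ exceed $\gamma/(k\log|Q|)$ there, using concrete bounds such as $|Q'|\ge 8\gamma-1-4\gamma>\gamma/(2\log|Q|)$. Your own intermediate estimate $|Q'|\ge\tfrac12 f(k)\gamma-\tfrac12-(2kg(k)+2)\gamma$ already yields such a concrete bound for $k=2,3$; you just need to use it directly rather than routing through an unformalized $k=1$ hypothesis.
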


\begin{proof}
We prove this by induction on $k$. 

First, consider the case $k=2$. Note that $|T| \ge |Q|/2\ge 8\gamma > k$. When we
run $\textrm{Select}(Q,2)$, if $|D_{Q}(T)|\ge 2(2\lambda+\gamma)$, then by Claim
\ref{claim:condense}, we output two totally incomparable sets of size at
least $\gamma/(2\log|Q|)$. Otherwise, $|D_{Q}(T)|<2(2\lambda+\gamma) \le 4\gamma$, noting that $\lambda \le g(2)\gamma$ and $g(2)\le \frac{1}{2}$. In this case, the output is obtained from
$\textrm{Select}(T,1)$ and $\textrm{Select}(Q\setminus(T\cup D_{Q}(T)),1)$. Thus, we output two sets $T$ and $Q\setminus(T\cup D_{Q}(T))$ which are totally incomparable. Furthermore, $$|T| \ge |Q|/2 > \gamma/(2\log |Q|),$$ and $$|Q\setminus(T\cup D_{Q}(T))| \ge \lfloor|Q|/2\rfloor-4\gamma \ge 8\gamma - 1 - 4\gamma > \gamma/(2\log |Q|).$$Hence, the claim holds in the case $k=2$. 

Next, consider the case $k=3$. Similarly, when we run $\textrm{Select}(Q,3)$, noting that $|T| \ge |Q|/2 \ge \frac{1}{2}f(3)\gamma > k$, if $|D_{Q}(T)|\ge 2(2\lambda+\gamma)$, then by Claim
\ref{claim:condense}, we output three totally incomparable sets of size at
least $\gamma/(3\log|Q|)$. Otherwise, $|D_{Q}(T)|<2(2\lambda+\gamma) \le 4\gamma$, and the output is obtained from
$\textrm{Select}(T,2)$ and $\textrm{Select}(Q\setminus(T\cup D_{Q}(T)),1)$. Since $|T| \ge |Q|/2 \ge \frac{1}{2}f(3)\gamma \ge f(2)\gamma$, the claim in the case $k=2$ yields that $\textrm{Select}(T,2)$ outputs two totally incomparable sets each of size at least $\gamma/(2\log |Q|)$. Together with the set $Q\setminus(T\cup D_{Q}(T))$ which has size at least $\lfloor|Q|/2\rfloor-4\gamma > \gamma/(3\log|Q|)$, we obtain that in this case $\textrm{Select}(Q,3)$ outputs three totally incomparable sets each of size at least $\gamma/(3\log|Q|)$. Hence, the claim holds in the case $k=3$.
 
Assume that the claim is true for all $k'<k$ for some $k\ge 4$. By induction, we easily obtain that any function $f:\mathbb{Z}_+\to \mathbb{R}_+$ with $f(2)\ge 16$ and $f(k)\ge 2f(\lceil k/2\rceil)$ for all $k\ge 2$ satisfies that $f(k) \ge 8k$ for all $k\ge 2$. When we
run $\textrm{Select}(Q,k)$, noting that $|T| \ge |Q|/2 \ge \frac 1 2 f(k) \gamma \ge k$, if $|D_{Q}(T)|\ge 2(k\lambda+\gamma)$, then by Claim
\ref{claim:condense}, we output $k$ totally incomparable sets of size at
least $\gamma/(k\log|Q|)$. Otherwise, $|D_{Q}(T)|<2(k\lambda+\gamma)$, and the output is obtained from
$\textrm{Select}(T,\lceil k/2\rceil)$ and $\textrm{Select}(Q\setminus(T\cup D_{Q}(T)),\lfloor k/2\rfloor)$.
Note that 
\[
|T|\ge\frac{|Q|}{2}\ge \frac{1}{2} f(k) \gamma \ge f(\lceil k/2\rceil) \gamma,
\] 
and 
\begin{align*}
|Q\setminus(T\cup D_{Q}(T))| & \ge \left\lfloor\frac{|Q|}{2}\right\rfloor - 2(k\lambda+\gamma)\\
& \ge \frac{1}{2}f(k)\gamma - 1 - 2\gamma - 2kg(k)\gamma \\
& \ge \frac{1}{2}f(k)\gamma - 3\gamma - 2kg(k)\gamma \\
& \ge f(\lfloor k/2\rfloor)\gamma.
\end{align*}
Furthermore, since $\lfloor k/2\rfloor \le \lceil k/2\rceil \le k$ and $g$ is decreasing, we have that $\lambda \le g(k)\gamma \le g(\lceil k/2\rceil) \gamma  \le g(\lfloor k/2\rfloor) \gamma$. By induction, $\textrm{Select}(T,\lceil k/2\rceil)$ outputs $\lceil k/2\rceil$ pairwise totally incomparable sets and $\textrm{Select}(Q\setminus(T\cup D_{Q}(T)),\lfloor k/2\rfloor)$ outputs $\lfloor k/2\rfloor$ pairwise totally incomparable sets. The $k$ sets from the two outputs can be easily checked to be pairwise totally incomparable, and each of the set has size at least $\gamma/(k\log |Q|)$. This finishes the induction. 
\end{proof}

Lemma \ref{lem:incomp} readily follows from Claim \ref{claim:select}. 

\section{Multiple partial orders }

A sequence of sets $(A_{1},\dots,A_{k})$ is \textit{homogeneous} with respect
to the partial ordering $<$ if either all pairs of sets are totally incomparable,
or $A_{1}<A_{2}<\dots<A_{k}$, or $A_{1}>A_{2}>\dots>A_{k}$. In this
section, we prove the Theorem \ref{thm:multiple} showing that for
any list of partial orders on the same set, there exists $k$ large
subsets that are homogeneous with respect to each of the partial orders. 

We will use the following standard cake-cutting lemma in the proof of Theorem \ref{thm:multiple}. Cake-cutting and more generally fair division is a long-studied topic (see, for example, \cite{DS}). 

\begin{lem}\label{cakelemma}
Let $I \subset \mathbb{R}$ be an interval, $s$ be a positive integer, and $\mu_1,\ldots,\mu_s$ be absolutely continuous measures on $I$. Then there is a partition $I=I_1 \cupdot \dots \cupdot I_s$ into consecutive intervals and a permutation $\pi$ of $\{1,\ldots,s\}$ such that $\mu_{\pi(i)}(I_i) \geq \mu_{\pi(i)}(I)/s$ for $i=1,\ldots,s$. 
\end{lem}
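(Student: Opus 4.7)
The plan is to prove Lemma \ref{cakelemma} by induction on $s$, following a classical moving-knife argument. The base case $s=1$ is immediate: take $I_1 = I$ and $\pi$ the identity.

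For the inductive step, for each $j \in \{1,\ldots,s\}$, I would consider the cumulative distribution function $F_j(x) := \mu_j(I \cap (-\infty, x])$. Absolute continuity of $\mu_j$ ensures that $F_j$ is continuous and nondecreasing from $0$ to $\mu_j(I)$, so by the intermediate value theorem there exists some $a_j \in I$ with $F_j(a_j) = \mu_j(I)/s$. Let $j^\ast$ be an index minimizing $a_j$, set $I_1 := I \cap (-\infty, a_{j^\ast}]$, and put $\pi(1) := j^\ast$, so that $\mu_{j^\ast}(I_1) = \mu_{j^\ast}(I)/s$ as required.

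The observation driving the recursion is that for every $i \neq j^\ast$, minimality of $a_{j^\ast}$ forces $F_i(a_{j^\ast}) \leq F_i(a_i) = \mu_i(I)/s$, so the remaining interval $I' := I \setminus I_1$ carries $\mu_i$-mass at least $(s-1)\mu_i(I)/s$. I would then apply the induction hypothesis to $I'$ with the $s-1$ restricted measures $\mu_i|_{I'}$ for $i \neq j^\ast$, obtaining a consecutive partition $I' = I_2 \cupdot \cdots \cupdot I_s$ and a bijection $\pi \colon \{2,\ldots,s\} \to \{1,\ldots,s\} \setminus \{j^\ast\}$ with $\mu_{\pi(i)}(I_i) \geq \mu_{\pi(i)}(I')/(s-1) \geq \mu_{\pi(i)}(I)/s$. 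Concatenating $I_1$ with these intervals yields the desired partition, which is automatically into consecutive subintervals of $I$ since $I_1$ is a prefix of $I$. There is no serious obstacle in this argument; the only points requiring care are invoking absolute continuity to produce exact cut points $a_j$ (rather than only approximate ones) via continuity of $F_j$, and noting that each restriction $\mu_i|_{I'}$ remains absolutely continuous so that the induction applies on the next level.
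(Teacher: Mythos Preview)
Your argument is correct and is essentially the same moving-knife induction as the paper's: you peel off the shortest prefix interval on which some measure first reaches its $1/s$ share and recurse on the rest, whereas the paper peels off the shortest suffix interval instead, which is a cosmetic difference. Your write-up is in fact more explicit than the paper's sketch about why the remaining $s-1$ measures each retain at least a $(s-1)/s$ fraction of their total mass, which is exactly what makes the inductive step go through.
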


The proof of Lemma \ref{cakelemma} follows from a greedy algorithm and induction on $t$. The base case $t=1$ is trivial. The greedy algorithm scans from the end of $I$, and if $I_t$ is the shortest ending interval for which there is a $j$ such that $\mu_j(I_t) \geq \mu_j(I)/t$, then we set $\pi(t)=j$. We then apply induction on the remaining interval $I \setminus I_t$ and the $t-1$ measures $\mu_i$ with $i \in [t] \setminus \{j\}$.   

We will need the following discrete consequence. 

\begin{cor}
Let $Q$ be a finite set with partition $Q=A_1 \cupdot \dots \cupdot A_k$. Let $B_1,\ldots,B_s$ be subsets of $Q$. Then there are integers $0=h_0 \leq h_1 \leq \cdots \leq h_s = h$ and a permutation $\pi$ of $\{1,\ldots,s\}$ such that $|B_{\pi(j)} \cap (\bigcup_{h_{j-1} < i\le h_{j}}A_{i})|\ge |B_{\pi(j)}|/s - \max_i |A_i|$ for $j =1,\ldots,s$.
\end{cor}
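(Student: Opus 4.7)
The plan is to deduce this discrete corollary by applying the continuous Lemma \ref{cakelemma} to a suitable family of measures on an auxiliary interval, and then rounding the resulting cut points to integers.

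First, I would embed the partition $Q = A_1 \cupdot \cdots \cupdot A_k$ into the interval $I = [0, k]$ by letting the $i$-th unit subinterval $J_i := [i-1, i]$ correspond to the part $A_i$. For each subset $B_j$, define the absolutely continuous measure $\mu_j$ on $I$ whose restriction to each $J_i$ is uniform with total mass $|B_j \cap A_i|$. Then $\mu_j(I) = |B_j|$, and for any integers $0 \leq a \leq b \leq k$ one has
\[
\mu_j([a,b]) = \left|B_j \cap \bigcup_{a < i \leq b} A_i\right|.
\]

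Next, I would apply Lemma \ref{cakelemma} to $\mu_1, \dots, \mu_s$ to obtain a partition $I = I_1 \cupdot \cdots \cupdot I_s$ into consecutive intervals $I_j = [t_{j-1}, t_j]$ (with $0 = t_0 \leq t_1 \leq \cdots \leq t_s = k$) and a permutation $\pi$ satisfying $\mu_{\pi(j)}(I_j) \geq |B_{\pi(j)}|/s$. Setting $h_j := \lfloor t_j \rfloor$, monotonicity of the floor function together with $t_0 = 0$ and $t_s = k$ yields $0 = h_0 \leq h_1 \leq \cdots \leq h_s = k$, which gives the desired sequence of integers (so the value of $h$ in the corollary is $k$).

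The main step, and the only place where anything nontrivial happens, is to show that this rounding costs at most $\max_i |A_i|$ of $\mu_{\pi(j)}$-mass per interval. I expect this to come down to a short case analysis. If $h_j \geq t_{j-1}$, then $[t_{j-1}, h_j] \subseteq [h_{j-1}, h_j] \cap I_j$, and the piece of $I_j$ discarded by the rounding is $(h_j, t_j]$, which lies inside the single strip $J_{h_j + 1}$ and therefore has $\mu_{\pi(j)}$-mass at most $|A_{h_j + 1}| \leq \max_i |A_i|$. In the remaining degenerate case $h_j < t_{j-1}$, the whole $I_j$ is contained in $J_{h_j + 1}$, which forces $|B_{\pi(j)}|/s \leq \mu_{\pi(j)}(I_j) \leq \max_i |A_i|$, so the claimed inequality is trivial because its right-hand side is non-positive. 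No serious obstacle is expected; the only care needed is in handling the degenerate case where $t_{j-1}$ and $t_j$ fall in the same unit subinterval of $I$.
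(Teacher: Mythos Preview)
Your proposal is correct and follows essentially the same approach as the paper: define absolutely continuous measures on $[0,k]$ encoding the intersection sizes, apply Lemma~\ref{cakelemma}, and round the cut points to integers. The only cosmetic difference is that the paper rounds with $h_j=\lceil r_j\rceil$ (losing mass on the left of each interval) while you round with $h_j=\lfloor t_j\rfloor$ (losing mass on the right); both choices discard a subset of a single unit strip and hence at most $\max_i|A_i|$ of mass, and your write-up actually spells out this verification (including the degenerate case) more carefully than the paper does.
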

\begin{proof}
Let $I=[0,k]$. For $j=1,\ldots,s$, define the measure $\mu_j$ by $\mu_j([0,r])= |B_j \cap (\bigcup_{1 \leq i \leq r} A_i)|$ for each integer $r=0,\ldots,k$ and linearly interpolate between consecutive integers so that the measures $\mu_1,\ldots,\mu_s$ are absolutely continuous. Applying Lemma \ref{cakelemma}, there are intervals $I_j:=(r_{j-1},r_j]$ for $j=1,\ldots,s$ where $0=r_0 \leq r_1 \leq \ldots \leq r_s=k$, and a permutation $\pi$ of $\{1,\ldots,s\}$ such that $\mu_{\pi(j)}(I_j) \geq \mu_{\pi(j)}(I)/st$ for $j=1,\ldots,s$
. In order to discretize these intervals, we round to the next integer by letting $h_j=\lceil r_j \rceil$ for $j=1,\ldots,s$.  This choice of the $h_j$'s and $\pi$ has the desired property. 
\end{proof}

The following lemma is useful for the proof of Theorem \ref{thm:multiple}.

\begin{lem}
\label{lem:partition}Let $k>k' \geq 1$ be integers.  Suppose set $Q$ has a partition $Q=A_{1}\cupdot\dots\cupdot A_{k}$ into
subsets of equal size $a$. Let $B_{1},\dots,B_{k'}$ be disjoint
subsets of $Q$ of equal size $b \geq a$. Then there
exists $t_{1},t_{2},\dots,t_{k'/3}$ and $0=h_{0}<h_{1}<h_{2}<\dots<h_{k'/3}\le k$
such that $|B_{t_{j}}\cap(\bigcup_{h_{j-1} < i\le h_{j}}A_{i})|\ge\frac{b}{k'}$
for all $j\in[k'/3]$. 

\end{lem}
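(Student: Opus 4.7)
The plan is to reduce the lemma to a bipartite matching problem. Let $B := \bigcup_t B_t$ (so $|B| = k'b$), and let $\nu$ be the measure on $[0, k]$ with density $|A_i \cap B|$ on the unit interval $[i-1, i]$. I equipartition $\nu$: choose real boundaries $0 = r_0 < r_1 < \cdots < r_{k'/3} = k$ with $\nu((r_{j-1}, r_j]) = 3b$ for each $j$, which exist since the cumulative $\nu$-distribution is continuous. The density of $\nu$ is at most $a$ everywhere (as $|A_i \cap B| \leq |A_i| = a$), so each continuous block has length at least $3b/a \geq 3$, using $b \geq a$. Rounding up to integer boundaries $h_j := \lceil r_j \rceil$, with $h_0 := 0$ and $h_{k'/3} := k$, gives $h_j - h_{j-1} \geq 2$, so the integer boundaries are strictly increasing and contained in $[0, k]$. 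The rounding loses at most $a$ units of $\nu$-mass per block, so each discrete block $C_j := \bigcup_{h_{j-1} < i \leq h_j} A_i$ satisfies $|C_j \cap B| \geq 3b - a \geq 2b$.

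For each block $j$, since $|C_j \cap B| = \sum_t |B_t \cap C_j| \geq 2b$, some $B_t$ satisfies $|B_t \cap C_j| \geq 2b/k' > b/k'$. Form the bipartite graph $G$ with the $k'/3$ blocks on one side and the $k'$ subsets $\{B_t\}$ on the other, placing an edge from $C_j$ to $B_t$ whenever $|B_t \cap C_j| \geq b/k'$. For any subset $S$ of blocks with neighborhood $N(S) \subseteq [k']$, non-neighbors $B_t$ contribute less than $b/k'$ per block in $S$, so
\[
|S|(3b - a) \leq \sum_{j \in S} |C_j \cap B| \leq |N(S)|\, b + (k' - |N(S)|) \cdot |S| \cdot \frac{b}{k'}.
\]
Rearranging yields $|N(S)| \geq |S|(2b - a)k'/(b(k' - |S|))$, which, using $b \geq a$, simplifies to $|N(S)| \geq |S|$.

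By Hall's marriage theorem, $G$ has a matching of size $k'/3$, producing distinct indices $t_1, \ldots, t_{k'/3}$ with $|B_{t_j} \cap C_j| \geq b/k'$ for every $j$, as required. The main obstacle is the verification of Hall's condition via the above mass inequality, which relies crucially on the bound $3b - a \geq 2b$ obtained from the hypothesis $b \geq a$; the length bound ensuring a strictly increasing discrete partition is a secondary technicality, and the rest of the argument is routine.
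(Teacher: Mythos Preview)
Your proof is correct and takes a genuinely different route from the paper's. The paper argues by a greedy scan: set $r=b$, repeatedly find the smallest prefix of the remaining $A_i$'s whose intersection with the remaining $B_t$'s has size at least $r$, pick the $B_t$ with largest intersection with that prefix (pigeonhole gives $\geq r/k'=b/k'$), delete both the prefix and that $B_t$, and continue. Since each step removes at most $r+a+b\le 3b$ of the total mass $k'b$, the process runs for at least $k'/3$ steps.

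Your approach instead fixes the block structure once by equipartitioning a measure and then finds the assignment via Hall's theorem, with Hall's condition verified by a clean double count. Both arguments hinge on the same arithmetic (the $3b-a\ge 2b$ slack coming from $b\ge a$), but they organise it differently: the paper's greedy proof is more elementary and explicitly algorithmic, while yours separates the ``where to cut'' and ``which $B_t$ to assign'' decisions, which makes the role of $b\ge a$ very transparent and yields distinct $t_j$'s as a by-product of the matching. A minor remark: your length bound actually gives $h_j-h_{j-1}\ge 3$ (since $\lceil r_j\rceil-\lceil r_{j-1}\rceil>r_j-r_{j-1}-1\ge 2$ and the difference is an integer), a little stronger than the $\ge 2$ you state, but of course either suffices for strict monotonicity.
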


\begin{proof}
We consider the following iterative procedure. Let $r=b$ and $P=\bigcup_{i\le k'}B_{i}$.
Set $A_{i}^{0}=A_{i}\cap P$ for $i\le k$, and $F^{0}=\emptyset$. At step
$j \geq 0$,  let $U_{j,h}=\bigcup_{i \leq h} A_i^j$ and $h_j$ be minimum such that $|U_{j,h_j}| \geq r$. 
Then we let $t_{j}\in [k'] \setminus F^{j}$
be so that $B_{t_{j}}$ has the largest intersection with $U_{j,h_j}$.
We update $A_{i}^{j+1}=A_{i}^{j}\setminus(B_{t_{j}}\cup U_{j,h_j})$
for all $i\le k$ and update $F^{j+1}=F^{j}\cup\{t_{j}\}$. We stop
the process when $|U_{j,k}|<r$.  Observe that $A_{i}^{j+1}$ is disjoint from each $B_t$ with $t \in F^{j+1}$,  and $A_{i}^{j+1}$ is 
empty if $i \leq h_j$.  Notice that if the process does not stop by step $j$, then at step $j$,
we have $|B_{t_{j}}\cap U_{j,h_j}|\ge r/k'$ by the pigeonhole principle.  
Furthermore, since $A_{i}^{j}=\emptyset$ for all $i\le h_{j-1}$,
we have $|B_{t_{j}}\cap(\bigcup_{h_{j-1} < i\le h_{j}}A_{i})|=|B_{t_{j}}\cap U_{j,h_j}|\ge r/k'$.
Thus it suffices to show that there must be at least $k'/3$ steps
of the procedure before we stop.  Note that for each $j$ for which $U_{j+1,k}$ is defined,  we have $|U_{j,k}|-|U_{j+1,k}| \leq r+a+b \leq 3b$.  As $|P| =k'b$,  we can continue for at least $k'/3$ steps,  as desired. 
\end{proof}

We next give the proof of Theorem \ref{thm:multiple}.
\begin{proof}[Proof of Theorem \ref{thm:multiple}]
Define $k_h=k$ and inductively define $k_{\ell-1}=(10k_\ell)^{12}(\log n)$
for $\ell\le h$. Let $n_1=\frac{n}{10^{4}k_1^{2}(\log n)}$,
and inductively, $n_{i+1}=\frac{k_in_i}{(10k_{i+1})^{12}(\log n)}$
for $1\le i\le h-1$. Note that $n_{i+1}\ge n_{i}$ for $1\le i\le h-1$
for sufficiently large $n$. We prove by induction on $\ell\le h$
that for $n$ sufficiently large, we can find a sequence of $k_\ell$ sets which is homogeneous with respect to $<_{1},\dots,<_{\ell}$, and each set has size $n_\ell$.
This is true for $\ell=1$ by Theorem \ref{thm:multi-dilworth}.  Assume
that the claim holds for all $\ell'<\ell$; we prove the claim for
$\ell$. 

Let $(A_{1},\dots,A_{k_{\ell-1}})$ be a homogeneous sequence of sets with respect
to $<_{1},\dots,<_{\ell-1}$, where each set in the sequence has size $n_{\ell-1}$. Let $Q=A_{1}\cup\dots\cup A_{k_{\ell-1}}$.
Let $k'_\ell=3k_\ell^{2}$. By Theorem \ref{thm:multi-dilworth},
we can find subsets $B_{1},\dots,B_{k'_\ell}$ of $Q$ each of size
$\frac{|Q|}{40(k'_{\ell})^{2}(\log|Q|)}$ which are pairwise
totally incomparable with respect to $<_{\ell}$, or we can find subsets $B_{1},\dots,B_{k'_{\ell}}$
of $Q$ each of size $\frac{|Q|}{10^{4}(k'_\ell)^{5}}$
and $B_{1}<_{\ell}\dots<_{\ell}B_{k'_\ell}$ or $B_{1}>_{\ell}\dots>_{\ell}B_{k'_\ell}$.
Let $B=B_{1}\cup\dots\cup B_{k'_\ell}$. Note that 
\[
\min\left(\frac{|Q|}{40(k'_\ell)^{2}(\log|Q|)},\frac{|Q|}{10^{4}(k'_\ell)^{5}}\right)\ge n_{\ell}.
\]

First, consider the case $B_{t},B_{t'}$ are totally incomparable for all
$t\ne t'$. By Lemma \ref{lem:partition}, there exists there exists
$t_{1},t_{2},\dots,t_{k'_\ell/3}$ and $0=h_{0}<h_{1}<h_{2}<\dots<h_{k'_\ell/3}\le k_{\ell-1}$
such that for all $j\in[k'_\ell/3]$, $$\left|B_{t_{j}}\cap\left(\bigcup_{h_{j-1} < i\le h_{j}}A_{i}\right)\right|\ge\frac{1}{k'_\ell}\cdot\frac{|Q|}{40(k'_\ell)^{2}(\log|Q|)}.$$The sets $B_{t_{j}}\cap(\bigcup_{h_{j-1} < i\le h_{j}}A_{i})$
for $j\in[k'_\ell/3]$ form a homogeneous sequence of sets with respect to $<_{1},\dots,<_{\ell-1},<_{\ell}$,
and each set has size at least $\frac{k_{\ell-1}n_{\ell-1}}{40(k'_\ell)^{3}(\log n)}\ge n_\ell$.
Since $k'_\ell/3>k_\ell$, we obtain the desired conclusion in this
case. 

Next, consider the case $B_{1}<_{\ell}B_{2}<_{\ell}\dots<_{\ell}B_{k'_\ell}$
(the case $B_{1}>_{\ell}B_{2}>_{\ell}\dots>_{\ell}B_{k'_\ell}$ can
be treated similarly). By Lemma \ref{lem:partition}, there exists
there exists $t_{1},t_{2},\dots,t_{k'_\ell/3}$ and $0=h_{0}<h_{1}<h_{2}<\dots<h_{k'_\ell/3}\le k_{\ell-1}$
such that for all $j\in[k'_\ell/3]$, $$\left|B_{t_{j}}\cap\left(\bigcup_{h_{j-1} < i\le h_{j}}A_{i}\right)\right|\ge\frac{1}{k'_\ell}\cdot\frac{|Q|}{10^{4}(k'_\ell)^{5}}.$$Let $C_{j}=B_{t_{j}}\cap(\bigcup_{h_{j-1} < i\le h_{j}}A_{i})$,
then for $j\ne j'$, either $C_{j}>_{\ell}C_{j'}$ or $C_{j}<_{\ell}C_{j'}$,
and furthermore $(C_{1},C_{2},\dots,C_{k'_\ell/3})$ is a homogeneous sequence of sets with respect to $<_{1},\dots,<_{\ell-1}$. By Erd\H{o}s-Szekeres theorem, we can find
$\sqrt{k'_\ell/3}=k_\ell$ indices $j_{1},\dots,j_{k_\ell}$ such
that $j_{1}<\dots<j_{k_\ell}$ and either $C_{j_{1}}<_{\ell}\dots<_{\ell}C_{j_{k_\ell}}$
or $C_{j_{1}}>_{\ell}\dots>_{\ell}C_{j_{k_\ell}}$. Thus, $(C_{j_{1}},\dots,C_{j_{k_\ell}})$
forms a homogeneous sequence of sets with respect to $<_{1},\dots,<_{\ell-1},<_{\ell}$,
where each set in the sequence has size at least $\frac{1}{k'_\ell}\cdot\frac{|Q|}{10^{4}(k'_\ell)^{5}}=\frac{k_{\ell-1}n_{\ell-1}}{10^{4}(k'_\ell)^{6}}\ge n_\ell$,
completing the induction. 

Thus, we can find a homogeneous sequence of sets $(A_{1},\dots,A_{k})$ with
respect to $<_{1},\dots,<_{h}$ such that each set in the sequence has size at least 
\[
n_h\ge n_1\ge\frac{n}{10^{4}(10k\log n)^{2(1+12+12^{2}+\cdots+12^{h})}}\ge\frac{n}{(10k\log n)^{12^{h+1}}}. \qedhere
\]
\end{proof}

\section*{Acknowledgements}
We would like to thank Istv\'an Tomon for helpful comments and for pointing out the application of Theorem \ref{thm:multiple} to Erd\H{o}s-Hajnal type questions in matrices.

\end{document}